\documentclass[11pt]{amsart}
\usepackage{amsmath,amssymb,amsthm}
\usepackage[margin=1.1in]{geometry}
\usepackage[colorlinks=true,linkcolor=blue,citecolor=blue]{hyperref}

\newtheorem{theorem}{Theorem}[section]
\newtheorem{lemma}[theorem]{Lemma}
\newtheorem{proposition}[theorem]{Proposition}
\newtheorem{corollary}[theorem]{Corollary}
\theoremstyle{definition}
\newtheorem{definition}[theorem]{Definition}
\newtheorem{example}[theorem]{Example}
\theoremstyle{remark}
\newtheorem{remark}[theorem]{Remark}

\newcommand{\sr}{\sim_{r}}
\newcommand{\ler}{\le_{r}}
\newcommand{\lr}{<_{r}}
\newcommand{\sMvN}{\sim}
\newcommand{\spec}{\mathrm{sp}}
\newcommand{\dist}{\mathrm{dist}}
\newcommand{\CC}{\mathbb{C}}
\newcommand{\Inv}{\mathrm{Inv}}

\newcommand{\Az}{A_{0}}
\newcommand{\g}{\mathfrak{g}}

\title[The pure infiniteness transfer problem]{The pure infiniteness transfer problem}
\author{Arindam Sutradhar}
\address{Theoretical Statistics and Mathematics Unit, ISI Banglore, Bengaluru, Karnataka 560059}
\email{arindam1050@gmail.com}
\date{\today}

\subjclass[2020]{Primary 46L05, 16E50; Secondary 16S88, 46L55, 46H30}
\keywords{purely infinite simple ring, purely infinite $C^*$-algebra,
  holomorphic functional calculus, real rank zero, dense subalgebra,
  Cuntz algebra, Leavitt algebra, smooth subalgebra}

\begin{document}

\begin{abstract}
Problem 8.4 of Pino, Goodearl, Perera and Molina \cite{AGPSM} asks whether a dense subalgebra $\Az$ of a $C^*$-algebra $A$ that is purely infinite as a ring forces $A$ to be purely infinite as a $C^*$-algebra. We call this the \emph{pure infiniteness transfer problem}, the transfer being from the ring to its $C^*$-completion. The problem is open, even when $\Az$ is unital and simple. We settle it under two hypotheses: $A$ has real rank zero, and $\Az$ is closed under holomorphic functional calculus. Under these hypotheses $\Az$ is purely infinite simple as a ring if and only if $A$ is purely infinite simple as a $C^*$-algebra. We also prove an obstruction: a unital $C^*$-algebra with a nonzero finite projection has no dense hfc-closed purely infinite simple unital subring. Finally, the hypotheses hold for proper subalgebras, for the gauge action of $\mathbb{T}$ on a Cuntz algebra $\mathcal{O}_n$, the smooth subalgebra $\mathcal{O}_n^\infty$ is a proper dense hfc-closed subalgebra that is purely infinite simple as a ring.

\end{abstract}

\maketitle
\section{Introduction}\label{sec:intro}

Cuntz's notion of a purely infinite $C^*$-algebra \cite{Cuntz77, Cuntz81} has a purely algebraic counterpart, introduced for rings by Ara, Goodearl and Pardo \cite{AGP}: a unital ring is \emph{purely infinite simple} if it is simple and every nonzero right ideal contains an infinite idempotent. The two notions are known to interact. A $C^*$-algebra that is purely infinite as a ring is purely infinite as a $C^*$-algebra \cite[Prop.~3.17]{AGPSM}. What is not known is whether the algebraic property survives the passage to a completion, that is, whether it passes from a dense subring to the ambient $C^*$-algebra. This is the content of the following problem of Aranda~Pino, Goodearl, Perera and Siles~Molina \cite[Problem~8.4]{AGPSM}.

\begin{quote}
\textbf{Problem 8.4.} \emph{Let $\Az$ be a dense subalgebra of a $C^*$-algebra $A$. If $\Az$ is purely infinite in the algebraic sense, is $A$ purely infinite in the $C^*$-algebraic sense?}
\end{quote}

\noindent We call this the \emph{pure infiniteness transfer problem}, the transfer being from the ring $\Az$ to its completion $A$; we abbreviate it to the transfer problem below. The problem is open. It is unresolved even when $\Az$ is a unital simple purely infinite ring \cite[Problem~8.6]{AGPSM}, and the same authors conjecture that a related converse fails even for $C^*$-algebras of real rank zero \cite[Rmk.~3.18]{AGPSM}. We do not resolve Problem 8.4; we prove it under an additional regularity hypothesis on $\Az$, and we explain below why some such hypothesis appears to be needed.

The reason the two theories are comparable at all is that pure infiniteness admits a formulation with no analysis in it. For a unital simple $C^*$-algebra $A$, pure infiniteness is equivalent to the requirement that $A \ne \CC$ and that every nonzero $a \in A$ satisfy
\[
xay = 1 \qquad \text{for some } x,y \in A ,
\]
and this is precisely the criterion that Ara, Goodearl and Pardo take as the definition for rings \cite[Thm.~1.6]{AGP}; see \cite{Abr} for a survey and \cite{AAS} for a systematic treatment. Pure infiniteness is therefore a statement about the multiplicative structure alone: no norm, no limit, no positivity is involved. That is what makes it a natural candidate for transfer across a dense inclusion, and it is what gives Problem 8.4 its interest.

The difficulty is that a dense subalgebra sees the multiplicative structure of $A$ but not its topology. The equation $xay = 1$ can be solved in $\Az$ for every $a \in \Az$, yet the solutions carry no information about elements of $A \setminus \Az$: given $a \in A$ one may approximate it by $a' \in \Az$ and factor $x a' y = 1$, but the resulting witnesses need not remain bounded as $a' \to a$, and without such control the factorization does not pass to the limit.

We impose two conditions and show that together they suffice. The first is that $A$ have \emph{real rank zero}. This makes projections abundant: every nonzero hereditary $C^*$-subalgebra of $A$, and in particular every nonzero closed ideal, contains a nonzero projection \cite[Thm.~2.6]{BP}. The hypothesis is not restrictive in the direction of the conclusion, since purely infinite simple $C^*$-algebras have real rank zero \cite{Zhang}. Pure infiniteness of $A$ can then be tested on projections alone: a simple $C^*$-algebra is purely infinite precisely when every nonzero hereditary $C^*$-subalgebra contains an infinite projection \cite[Exercise~5.7]{RLL}. The required infinite projections can be produced from infinite idempotents of $\Az$, converting the problem into one about the existence of infinite idempotents. Idempotents are algebraic objects, and they are the natural currency in which a ring-theoretic hypothesis can be spent.

The second condition is that $\Az$ be closed under holomorphic functional calculus. Its role is to supply idempotents \emph{inside $\Az$}. Given a projection $q \in A$, density provides an approximant $a_0 \in \Az$, but $a_0$ is not an idempotent, and no algebraic manipulation of $a_0$ will make it one. What does produce an idempotent is the Riesz integral \cite[\S I.4]{Bla}: the spectrum of $a_0$ splits into two clusters near $0$ and $1$, and integrating the resolvent around the cluster near $1$ yields an idempotent $e$ with $\|e-q\|$ small. This integral converges in the norm of $A$, and there is no reason for its value to lie in $\Az$ --- unless $\Az$ is hfc-closed, which is exactly the assumption that it does. Thus hfc-closure is not a technical convenience: it is the precise hypothesis under which approximate analytic data can be converted into an exact algebraic certificate located in $\Az$, where the ring-theoretic hypothesis can act on it. Once $e \in \Az$ is available, the pure infiniteness of the ring $\Az$ makes $e$ an infinite idempotent \cite[Prop.~1.5]{AGP}, and standard bookkeeping \cite[Ch.~3]{RLL} transports this back to $q$, showing that every nonzero projection of $A$ is infinite.

A conditional theorem is worth little if its hypotheses can only be met trivially, and here the danger is real: a dense subalgebra that is closed in the norm of $A$ is all of $A$. We therefore exhibit proper examples. If a compact Lie group acts strongly continuously on a unital purely infinite simple $C^*$-algebra $A$, the algebra $A^\infty$ of elements with $C^\infty$ orbit map is dense (G\aa{}rding smoothing; see \cite{BC}), is a Fr\'echet algebra in the topology given by the derivatives of the action, and is inverse-closed --- because inversion is analytic on the invertibles of a Banach algebra, so that the orbit map of $a^{-1}$ is again smooth. By a theorem of Schweitzer \cite[Lem.~1.2]{Schw92}, a dense inverse-closed Fr\'echet subalgebra is hfc-closed. The concrete case is the gauge action of $\mathbb{T}$ on a Cuntz algebra \cite{Cuntz77, Raeburn}: $\mathcal{O}_n^\infty$ is a dense hfc-closed subalgebra of $\mathcal{O}_n$, purely infinite simple as a ring, and \emph{properly} contained in $\mathcal{O}_n$ --- the element $\sum_{d\ge1} d^{-2}s_1^{\,d}$ converges in norm but its orbit map is not three times differentiable. So the class of algebras to which our results apply is populated by proper subalgebras, and $\mathcal{O}_n^\infty$ is a purely infinite simple ring whose $C^*$-completion is purely infinite simple and strictly larger.

The algebraic theory of pure infiniteness begins with Ara, Goodearl and Pardo \cite{AGP}, who computed $K_0$ in the regular case, building on the exchange property established in \cite{AraExch}; it was extended to the non-simple setting by Aranda~Pino, Goodearl, Perera and Siles~Molina \cite{AGPSM}, who established permanence under matrices, corners, extensions and Morita equivalence and posed the problem quoted above. Surveys and systematic accounts are \cite{Abr, AAS}. On the analytic side the theory of purely infinite simple $C^*$-algebras goes back to Cuntz \cite{Cuntz77, Cuntz81}, with structural results of Lin--Zhang \cite{LinZhang} and R\o{}rdam \cite{Ror95}, criteria of Jolissaint--Robertson \cite{JR}, the non-simple theory of Kirchberg--R\o{}rdam \cite{KR}, and R\o{}rdam's example of a simple $C^*$-algebra with both a finite and an infinite projection \cite{Ror03}. The most substantial progress on Problem 8.4 known to us is that of Brown, Clark and an~Huef \cite{BCH}, who study it from the side of the reduced $C^*$-algebra $C^*_r(G)$ of an ample Hausdorff groupoid $G$. The natural dense subalgebra there, the Steinberg algebra, is too small for the converse to hold, so they introduce an intermediate $*$-algebra $B(G)$ assembled from the corners $1_U C^*_r(G) 1_U$; their main theorem is that, for $G$ minimal and effective, $B(G)$ is algebraically properly infinite if and only if $C^*_r(G)$ is purely infinite simple. This settles the transfer in a specific groupoid setting, by a route disjoint from ours: it builds an auxiliary algebra absorbing multiplication by elements of the completion, whereas our argument requires no such absorption but does require real rank zero.

Pure infiniteness has also been carried well beyond rings and $C^*$-algebras, with instructive consequences. Phillips introduced the notion for unital simple Banach algebras in his work on $L^p$-analogues of the Cuntz algebras \cite{Phillips}, and it agrees there with the ring-theoretic definition of \cite{AGP}. The agreement does not make the theories interchangeable: Daws and Horv\'ath \cite{DawsHorvath}, continuing their study of ring-theoretic finiteness in reduced products \cite{DawsHorvath2}, constructed a Cuntz-like Banach $*$-algebra that is purely infinite but has ultrapowers which are not even simple, in sharp contrast with the $C^*$-case. The notion has since been extended to non-unital Banach algebras and to Banach algebras of twisted \'etale groupoids \cite{KwasniewskiEtAl}. In a different direction, the holomorphic functional calculus on which our argument turns has itself been the subject of recent work in the Banach-algebraic setting: Hartmann and Lesch \cite{HartmannLesch} develop a multivariate holomorphic calculus for commuting tuples in a general Banach algebra, with explicit formulas from which the continuity properties can be read off directly; earlier, Lesch \cite{Lesch17} used divided differences to explain combinatorial identities arising in the spectral geometry of noncommutative tori \cite{ConnesMoscovici}.

Section~\ref{sec:setting} fixes conventions and recalls the facts about real rank zero used later. Section~\ref{sec:idem} contains the technical heart: it shows how a Riesz idempotent turns an element of $\Az$ near a projection of $A$ into an exact idempotent of $\Az$, and how algebraic infiniteness transfers between the two. Section~\ref{sec:hfc} isolates the role of holomorphic functional calculus and records the properties of smooth dense subrings. Section~\ref{sec:transfer} proves the transfer theorem, and Section~\ref{sec:converse} the converse, the resulting characterization, and the obstruction from finite projections. Section~\ref{sec:examples} carries out the example: the smooth subalgebra $\mathcal{O}_n^\infty$ of a Cuntz algebra under the gauge action, together with the closing discussion.

\section{Conventions}\label{sec:setting}

Throughout, $A$ is a unital $C^*$-algebra and $\Az \subseteq A$ is a \emph{dense unital subring}: a subring closed under addition and multiplication, containing a multiplicative identity $1_{\Az}$, and norm-dense in $A$. The following elementary observation is used silently throughout.

\begin{lemma}[density transfers the unit]\label{lem:unit}
If $\Az$ is a dense unital subring of $A$, then $1_{\Az} = 1_A$. In particular $A$ is automatically unital with the same identity.
\end{lemma}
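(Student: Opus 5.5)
Write $e = 1_{\Az}$. The plan is to show that $e$ acts as a two-sided identity on all of $A$ by a continuity argument, and then to identify $e$ with the identity of $A$.

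\textbf{Step 1: $e$ is an identity for $A$.} Fix $a \in A$. By density choose $a_n \in \Az$ with $a_n \to a$ in norm. Since $e$ is the identity of the ring $\Az$, we have $e a_n = a_n = a_n e$ for every $n$. Multiplication in a $C^*$-algebra is jointly continuous, since $\|xy\|\le\|x\|\|y\|$. Letting $n\to\infty$ therefore gives
\[
ea = a = ae .
\]
So $e$ is a two-sided multiplicative identity for $A$.

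\textbf{Step 2: $e = 1_A$.} A two-sided identity is unique. Since the paper assumes $A$ unital, $e = e\,1_A = 1_A$.

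\textbf{Step 3: the case where $A$ is not assumed unital.} Step 1 alone shows that $A$ has an identity, namely $e$. It remains to check that $e$ is the $C^*$-unit, which amounts to checking that $e$ is a projection.
\begin{itemize}
\item Apply the adjoint to the identity $ea = a = ae$. This gives $a^*e^* = a^* = e^*a^*$ for all $a$, so $e^*$ is also an identity.
\item By uniqueness of the identity, $e^* = e$.
\item Moreover $e^2 = e$, because $e$ is an identity and in particular $e \cdot e = e$.
\end{itemize}
Hence $e$ is a projection, and $e \neq 0$ unless $A = 0$. Therefore $A$ is unital with unit $e$.

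The only point needing care is that the ring identity of $\Az$ is a priori unrelated to the analytic structure of $A$. The continuity argument in Step 1 is exactly what bridges this gap, so there is no real obstacle.
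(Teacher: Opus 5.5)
Your proof is correct and follows the paper's argument: approximate $a \in A$ by $a_n \in \Az$, pass to the limit in $e a_n = a_n = a_n e$ using norm-continuity of multiplication, and conclude $1_A = e$ by uniqueness of a two-sided identity. Your Step 3 (checking $e^* = e$) is a correct but unnecessary extra that the paper omits, since any two-sided identity of $A$ is already its unit.
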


\begin{proof}
Write $e := 1_{\Az}$. For $a \in A$ choose $a_n \in \Az$ with $a_n \to a$. Then $e a_n = a_n = a_n e$ for all $n$; letting $n \to \infty$ and using norm-continuity of multiplication gives $ea = a = ae$. Thus $e$ is a two-sided identity for $A$, so $A$ is unital and $1_A = e = 1_{\Az}$. (Density is essential: $\CC p$ for a proper projection $p$ is a unital subring of $A$ with $1 = p \ne 1_A$, but it is not dense.)
\end{proof}

\begin{definition}[ring-theoretic notions]\label{def:ring}
Let $R$ be a ring and $e,f \in R$ idempotents.
\begin{enumerate}
\item $f \ler e$ if $ef = fe = f$; and $f \lr e$ if additionally $f \ne e$.
\item $e \sr f$ if there exist $s,t \in R$ with $st = e$ and $ts = f$.
\item $e$ is an \emph{infinite idempotent} if $e \sr f \lr e$ for some idempotent $f$.
\item $R$ (unital) is \emph{purely infinite simple} if $R$ is simple and every nonzero right ideal contains an infinite idempotent \cite[Def.~1.2]{AGP}.
\end{enumerate}
\end{definition}

We shall use the equivalent characterization of Ara, Goodearl and Pardo \cite[Thm.~1.6]{AGP}: a unital ring $R$ is purely infinite simple if and only if it is not a division ring and, for every nonzero $a \in R$, there exist $x,y \in R$ with $xay = 1$.

\begin{definition}[spectra and inverse-closure]\label{def:invclosed}
For a unital algebra $R$ and $a \in R$ write $\Inv(R)$ for the group of invertible elements and
\[
\spec_R(a) = \{\lambda \in \CC : \lambda 1 - a \notin \Inv(R)\}
\]
for the spectrum computed in $R$. A unital subring $\Az \subseteq A$ is \emph{inverse-closed} (or \emph{spectrally permanent}) in $A$ if
\[
a \in \Az \cap \Inv(A) \implies a^{-1} \in \Az ,
\]
that is, an element of $\Az$ invertible in the larger algebra already has its inverse in $\Az$. Note the inverse is then unique and agrees with the one computed in $A$. Lemma~\ref{lem:implications}(2) records that this is equivalent to $\spec_{\Az}(a) = \spec_A(a)$ for all $a \in \Az$; the inclusion $\spec_A(a) \subseteq \spec_{\Az}(a)$ always holds, so the content is that $\Az$ sees no extra spectrum.
\end{definition}

\begin{definition}[hfc-closure]\label{def:hfc}
$\Az$ is \emph{closed under holomorphic functional calculus in $A$} (\emph{hfc-closed}, or \emph{smooth}) if for every $a \in \Az$ and every function $f$ holomorphic on an open neighborhood of $\spec_A(a)$, the element $f(a) \in A$ given by the Riesz--Dunford integral lies in $\Az$.
\end{definition}

\begin{remark}[hfc-closure is not norm-closure]\label{rem:notclosed}
The prototype is $C^\infty[0,1] \subsetneq C[0,1]$: dense, proper, not norm-closed, yet stable under holomorphic (and smooth) functional calculus.
\end{remark}

\begin{definition}[$C^*$-notions]\label{def:cstar}
Let $A$ be a $C^*$-algebra and $p,q \in A$ projections.
\begin{enumerate}
\item $p \sMvN q$ (\emph{Murray--von Neumann equivalence}) if $v^*v = p$ and $vv^* = q$ for some $v \in A$.
\item $p$ is an \emph{infinite projection} if $p \sMvN q$ for some projection $q \le p$ with $q \ne p$.
\item $A$ is \emph{purely infinite simple} if $A$ is simple, $A \ne \CC$, and every nonzero hereditary $C^*$-subalgebra contains an infinite projection.
\end{enumerate}
\end{definition}

Definition~\ref{def:cstar}(3) is from Cuntz's article \cite{Cuntz81}. Several equivalent formulations are known; in particular, for a unital $C^*$-algebra it is equivalent to requiring $A \ne \CC$ and that for every nonzero $x \in A$ there exist $\alpha, \beta \in A$ with $\alpha x \beta = 1$ \cite[Prop.~6.11.5]{BlaK}, the exact analogue of the ring-theoretic criterion above, and, equivalently, that for every nonzero positive $a \in A$ there exists $x \in A$ with $x^* a x = 1$ \cite[Exercise~5.7]{RLL}. It is the formulation in (3) that we verify.

\begin{definition}[real rank zero]\label{def:rr0}
A unital $C^*$-algebra $A$ has \emph{real rank zero} if the invertible self-adjoint elements are dense in $A_{sa}$ \cite{BP}. Equivalently, $A$ has property (FS): the self-adjoint elements of finite spectrum are dense in $A_{sa}$ \cite[Thm.~2.6]{BP}.
\end{definition}

\begin{lemma}[Brown--Pedersen]\label{fact:rr0}
If $A$ has real rank zero, then every nonzero hereditary $C^*$-subalgebra $B \subseteq A$ (in particular every nonzero closed two-sided ideal) contains a nonzero projection.
\end{lemma}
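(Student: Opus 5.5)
Let $B \subseteq A$ be a nonzero hereditary $C^*$-subalgebra. I will produce a self-adjoint element of $B$ with finite spectrum and a nonzero eigenvalue. A spectral projection of such an element is a nonzero projection, and it lies in $B$ because it is a polynomial without constant term in that element. The ideal case is a special instance: a closed two-sided ideal $I$ is hereditary, since $0 \le b \le a$ with $a \in I$ gives $b^{1/2} \in I$ by the usual factorization argument.

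The first step is to find a suitable positive element. Choose a nonzero positive $b \in B$ and normalize so that $\|b\| = 1$. By Definition~\ref{def:rr0} (property (FS)), for a small $\varepsilon > 0$ there is a self-adjoint $h \in A$ with finite spectrum and $\|h - b\| < \varepsilon$. This $h$ need not lie in $B$, so I do not use its spectral projections directly. Instead I will push the information into $B$ by compressing with $b$.

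The main obstacle is exactly this step: (FS) gives approximants in $A$, not in $B$. I would handle it by passing to the real rank zero of $B$ itself, citing Brown--Pedersen \cite[Thm.~2.5 and Cor.~2.8]{BP}, where real rank zero is shown to pass to hereditary subalgebras. If I must avoid that citation, I would argue directly with functional calculus on $b$, as follows.
\begin{enumerate}
\item Let $f$ be continuous, vanishing on $[0,1/2]$, with $f(1)=1$, and set $c = f(b) \in B$. Then $c \neq 0$ because $1 \in \spec(b)$.
\item Let $g$ be continuous, with $g = 1$ on $[1/2,1]$ and $g(0) = 0$, so that $g(b)c = c$.
\item Apply (FS) inside the corner $\overline{cAc} \subseteq B$. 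This is legitimate because corners of real rank zero algebras again have real rank zero, by the standard perturbation argument of \cite{BP}.
\item This yields a finite-spectrum self-adjoint $k$ in $\overline{cAc}$ close to $c$, hence $k \neq 0$.
\end{enumerate}
I therefore expect the proof in the paper to simply cite \cite[Thm.~2.6]{BP} together with hereditary permanence.

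Once a nonzero self-adjoint $k \in B$ with finite spectrum is available, take $\lambda \in \spec(k)\setminus\{0\}$. Let $p = \chi_{\{\lambda\}}(k)$. Since $\spec(k)$ is finite, $\chi_{\{\lambda\}}$ agrees on $\spec(k)$ with a polynomial $P$ satisfying $P(0)=0$. Hence $p = P(k)$ is a nonzero projection in $B$.
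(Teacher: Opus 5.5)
Your proof is correct, but it goes through a different clause of Brown--Pedersen than the paper's. The paper uses the (HP) part of \cite[Thm.~2.6]{BP} directly. A nonzero hereditary $C^*$-subalgebra $B$ of a real rank zero algebra has an approximate unit consisting of projections, and these cannot all vanish. That argument needs neither hereditary permanence nor any functional calculus. You instead use permanence of real rank zero under passage to hereditary subalgebras (\cite[Cor.~2.8]{BP}) to get (FS) \emph{inside} $B$. You then approximate a norm-one positive $b \in B$ by a nonzero self-adjoint $k \in B$ with finite spectrum, and take $\chi_{\{\lambda\}}(k) = P(k)$ with $P(0)=0$. This costs an extra citation, but it makes transparent why the projection lands in $B$ even when $B$ is non-unital. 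Both routes are ultimately citations of Brown--Pedersen, and the paper's is the shorter one.

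One caution about your fallback in steps (1)--(4): it does not avoid the citation it was meant to avoid. The algebra $\overline{cAc}$ is a hereditary subalgebra, not a corner $pAp$ cut down by a projection. So asserting that it has real rank zero is precisely the hereditary permanence you wanted to bypass. Moreover, $f$ and $g$ play no further role, and the same reasoning could be applied to $B$ directly. Moving a finite-spectrum or projection-valued approximant into $B$ using only (FS) in $A$ is the genuinely nontrivial content of the Brown--Pedersen theorem. That paragraph should therefore be dropped, or presented as a restatement of the cited result rather than as an independent argument.
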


\begin{proof}
Since $A$ has real rank zero, every nonzero hereditary $C^*$-subalgebra $B$ has an approximate unit consisting of projections \cite[Thm.~2.6]{BP}; as $B \ne 0$, at least one of these projections is nonzero.
\end{proof}
\section{From approximate projections to infinite idempotents}\label{sec:idem}

This section assembles the elementary machinery used in the proof of the transfer theorem. Nothing here involves pure infiniteness or density; the results are statements about a single unital $C^*$-algebra $A$, and are collected in the order in which Theorem~\ref{thm:transfer} uses them.

The overall movement is in three steps. Given a projection $q$ of $A$ and an element $a$ of a subring close to it, we first \emph{capture} an idempotent $e$ near $q$ by holomorphic functional calculus, and observe that proximity forces $e$ and $q$ to be similar (\S\ref{ss:capture}). Since the ring-theoretic hypothesis will produce information about $e$ as an \emph{idempotent}, whereas pure infiniteness of $A$ is a statement about \emph{projections}, we next record how idempotents are converted to projections and how ring equivalence of projections upgrades to Murray--von Neumann equivalence (\S\ref{ss:convert}). Finally we combine these to transport infiniteness from an idempotent of a subring to a projection of $A$ (\S\ref{ss:transport}).

\subsection{Capturing an idempotent near a projection}\label{ss:capture}

If $a$ is close to a projection, its spectrum is confined to two small clusters, and the spectral idempotent belonging to the cluster near $1$ is close to the projection.

\begin{lemma}[spectral perturbation]\label{lem:spec}
Let $q \in A$ be a projection and $a \in A$ with $\|a - q\| \le \varepsilon$. Then
$\spec(a) \subseteq \{\lambda \in \CC : \dist(\lambda,\{0,1\}) \le \varepsilon\}$.
\end{lemma}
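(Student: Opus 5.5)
The plan is to show directly that every $\lambda$ with $\dist(\lambda,\{0,1\}) > \varepsilon$ lies in the resolvent set of $a$. We write $\lambda 1 - a$ as a small perturbation of $\lambda 1 - q$ and use the Neumann series. The key point is that the resolvent of a projection is explicit and its norm is exactly controlled by the distance to $\{0,1\}$.

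First we treat the case $\lambda \notin \{0,1\}$. Since $q$ is a projection, $\lambda 1 - q = \lambda(1-q) + (\lambda - 1)q$, with $q$ and $1-q$ complementary orthogonal projections. Hence
\[
(\lambda 1 - q)^{-1} = \lambda^{-1}(1-q) + (\lambda-1)^{-1} q .
\]
To bound its norm, view $q$ as a normal element with $\spec(q) \subseteq \{0,1\}$. The continuous functional calculus, or a direct $C^*$-identity computation on the orthogonal decomposition, gives
\[
\|(\lambda 1 - q)^{-1}\| = \max\{|\lambda|^{-1}, |\lambda-1|^{-1}\} \le \dist(\lambda,\{0,1\})^{-1},
\]
where a term is dropped if $q=0$ or $q=1$; the inequality still holds then. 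This norm identity, $\|(\lambda - q)^{-1}\| = 1/\dist(\lambda, \spec(q))$ for normal $q$, is the only place where the $C^*$-structure enters. I expect it to be the main point to state carefully, though it is standard.

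Next, write
\[
\lambda 1 - a = (\lambda 1 - q)\bigl(1 - (\lambda 1 - q)^{-1}(a - q)\bigr).
\]
If $\dist(\lambda,\{0,1\}) > \varepsilon$, then $\|(\lambda 1 - q)^{-1}(a-q)\| \le \varepsilon/\dist(\lambda,\{0,1\}) < 1$. The second factor is therefore invertible by the Neumann series, so $\lambda 1 - a$ is invertible as a product of invertibles, and $\lambda \notin \spec(a)$. Taking contrapositives, every point of $\spec(a)$ satisfies $\dist(\lambda,\{0,1\}) \le \varepsilon$, which is the statement. The case $\lambda \in \{0,1\}$ needs no argument, since there $\dist = 0 \le \varepsilon$. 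The spectrum here is $\spec_A$, as in the statement.
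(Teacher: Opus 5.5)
Your proof is correct and follows the paper's route: factor $\lambda 1 - a = (\lambda 1 - q)\bigl(1 - (\lambda 1 - q)^{-1}(a-q)\bigr)$, bound $\|(\lambda 1 - q)^{-1}\|$ by $1/\dist(\lambda,\{0,1\})$, and apply the Neumann series. Your inequality $\le$ for that resolvent norm is slightly more careful than the paper's equality, which fails when $q \in \{0,1\}$.
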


\begin{proof}
For $\lambda$ with $d := \dist(\lambda,\{0,1\}) > \varepsilon$: since $\spec(q) \subseteq \{0,1\}$, $\lambda - q$ is invertible with $\|(\lambda - q)^{-1}\| = 1/d$. Then
$\lambda - a = (\lambda - q)\bigl(1 - (\lambda - q)^{-1}(a - q)\bigr)$
and $\|(\lambda - q)^{-1}(a - q)\| \le \varepsilon/d < 1$, so $\lambda - a$ is invertible by the Neumann series.
\end{proof}

\begin{lemma}[Riesz idempotent with estimate]\label{lem:riesz}
Let $q \ne 0$ be a projection, $a \in A$, $\|a - q\| \le \varepsilon < 1/4$. Let $\Gamma = \{\lambda : |\lambda - 1| = 1/2\}$, positively oriented, and set
\[
e := \frac{1}{2\pi i}\oint_{\Gamma} (\lambda - a)^{-1}\, d\lambda .
\]
Then $e$ is a well-defined idempotent, $e = \chi(a)$ where $\chi$ is holomorphic on a neighborhood of $\spec(a)$ ($\chi \equiv 1$ near the part of the spectrum close to $1$, $\chi \equiv 0$ near the part close to $0$), and
\[
\|e - q\| \le \frac{2\varepsilon}{1 - 2\varepsilon} < 1 .
\]
In particular, if $a \in \Az$ and $\Az$ is hfc-closed, then $e \in \Az$.
\end{lemma}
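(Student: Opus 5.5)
The plan is to compare the Riesz integral for $a$ with the same integral for $q$, using Lemma~\ref{lem:spec} to keep $\Gamma$ away from both spectra. First, $\varepsilon<1/4$ and Lemma~\ref{lem:spec} give $\spec(a)\subseteq D_0\cup D_1$, where $D_j$ is the closed disc of radius $\varepsilon$ about $j$. Every point of $\Gamma$ has distance exactly $1/2$ from $1$ and at least $1/2$ from $0$, so $\dist(\Gamma,\{0,1\}) = 1/2 > \varepsilon$. Hence $\Gamma\cap\spec(a)=\emptyset$, $\lambda\mapsto(\lambda-a)^{-1}$ is continuous on $\Gamma$, and $e$ is well defined. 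Next let $U_1=\{|\lambda-1|<1/2\}$ and $U_0=\{|\lambda|<1/3\}$; these are disjoint open sets and $D_1\subseteq U_1$, $D_0\subseteq U_0$, since $\varepsilon<1/4<1/3$. Define $\chi=1$ on $U_1$ and $\chi=0$ on $U_0$. Then $\chi$ is holomorphic on a neighborhood of $\spec(a)$, and the Cauchy formula identifies the Riesz--Dunford integral $\chi(a)$ with the integral over $\Gamma$ (the contour around $U_0$ contributes zero). Multiplicativity of the holomorphic functional calculus together with $\chi^2=\chi$ gives $e^2=e$. The final clause follows at once from Definition~\ref{def:hfc}, since $\chi$ is holomorphic near $\spec_A(a)$.

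For the estimate, apply the same integral to $q$. Since $\spec(q)\subseteq\{0,1\}$, $q\ne0$, and $(\lambda-q)^{-1}=\lambda^{-1}(1-q)+(\lambda-1)^{-1}q$, the integral over $\Gamma$ evaluates to $q$. So
\[
e-q=\frac1{2\pi i}\oint_\Gamma\bigl[(\lambda-a)^{-1}-(\lambda-q)^{-1}\bigr]d\lambda
=\frac1{2\pi i}\oint_\Gamma(\lambda-a)^{-1}(a-q)(\lambda-q)^{-1}\,d\lambda .
\]
On $\Gamma$ we have $d:=\dist(\lambda,\{0,1\})\ge 1/2$, so $\|(\lambda-q)^{-1}\|=1/d\le 2$ (using normality of $q$). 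The Neumann series from the proof of Lemma~\ref{lem:spec} gives $\|(\lambda-a)^{-1}\|\le (1/d)/(1-\varepsilon/d)\le 2/(1-2\varepsilon)$. The integrand is therefore bounded by $4\varepsilon/(1-2\varepsilon)$, and since the length of $\Gamma$ is $\pi$, we get
\[
\|e-q\|\le\frac{1}{2\pi}\cdot\pi\cdot\frac{4\varepsilon}{1-2\varepsilon}=\frac{2\varepsilon}{1-2\varepsilon}.
\]
This is $<1$ exactly when $\varepsilon<1/4$, which matches the stated hypothesis.

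I expect the main obstacle to be the bookkeeping in this estimate rather than any conceptual point. Two things need care. The first is that the resolvent bound for $a$ must be uniform on $\Gamma$, which needs $d\ge1/2$ at every point of $\Gamma$; this holds, with $d=1/2$ near $\lambda=1/2$. The second is that $\|(\lambda-q)^{-1}\|=1/d$ uses that $q$ is self-adjoint, so the norm equals the spectral radius. A minor point is that $q\ne0$ is not actually needed for the bound itself; it only ensures that $e\ne0$ afterwards, because $\|e-q\|<1=\|q\|$.
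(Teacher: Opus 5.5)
Your proposal is correct and follows the paper's proof essentially step for step. It uses Lemma~\ref{lem:spec} to keep $\Gamma$ off $\spec(a)$, a locally constant $\chi$ with multiplicativity of the functional calculus to get $e^2=e$, and the resolvent identity with the bounds $\|(\lambda-q)^{-1}\|\le 2$, $\|(\lambda-a)^{-1}\|\le 2/(1-2\varepsilon)$ and $\mathrm{length}(\Gamma)=\pi$. One cosmetic point: with your choice $U_1=\{|\lambda-1|<1/2\}$, the contour $\Gamma$ lies on the boundary of the domain of $\chi$. You should therefore either enlarge $U_1$ slightly (radius $0.6$ about $1$ is still disjoint from $U_0$) or shrink the contour and invoke Cauchy's theorem, which is what your appeal to the Cauchy formula amounts to.
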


\begin{proof}
\emph{Step 1: the contour avoids the spectrum.} By Lemma~\ref{lem:spec}, $\spec(a)$ lies in the $\varepsilon$-neighbourhood of $\{0,1\}$, i.e.\ in two disjoint disks, one about $0$ and one about $1$. Every $\lambda \in \Gamma$ satisfies $|\lambda - 1| = 1/2$ and, by the reverse triangle inequality, $|\lambda| \ge 1 - 1/2 = 1/2$; thus $\lambda$ is at distance $\ge 1/2$ from $\{0,1\}$, hence at distance $\ge 1/2 - \varepsilon > \varepsilon$ from $\spec(a)$. Since $\varepsilon < 1/4$, this gap is positive, so $\Gamma \cap \spec(a) = \emptyset$; as $\lambda \mapsto (\lambda - a)^{-1}$ is norm-continuous off the spectrum and $\Gamma$ is compact, the integral defining $e$ exists.

\emph{Step 2: $e$ is an idempotent.} The disk about $1$ lies inside $\Gamma$ and the disk about $0$ lies outside, so $\Gamma$ encircles exactly the part of $\spec(a)$ near $1$. Let $\chi$ be the function equal to $1$ on a neighbourhood of the near-$1$ disk and $0$ on a neighbourhood of the near-$0$ disk; being locally constant on a neighbourhood of the (disconnected) spectrum, $\chi$ is holomorphic there, and $e = \chi(a)$ by the holomorphic functional calculus. Since $\chi$ takes only the values $0$ and $1$ we have $\chi^2 = \chi$, and as the functional calculus is multiplicative, $e^2 = \chi(a)^2 = (\chi^2)(a) = \chi(a) = e$. Finally, if $a \in \Az$ and $\Az$ is hfc-closed then $e = \chi(a) \in \Az$.

\emph{Step 3: the estimate $\|e - q\| \le \frac{2\varepsilon}{1-2\varepsilon}$.} As $\spec(q) \subseteq \{0,1\}$ and only the point $1$ lies inside $\Gamma$, the same functional calculus gives $q = \frac{1}{2\pi i}\oint_{\Gamma} (\lambda - q)^{-1}\, d\lambda$. Subtracting the two contour integrals,
\[
e - q = \frac{1}{2\pi i}\oint_{\Gamma} \big[(\lambda - a)^{-1} - (\lambda - q)^{-1}\big]\, d\lambda .
\]
The integrand factors through the resolvent identity
\[
(\lambda - a)^{-1} - (\lambda - q)^{-1} = (\lambda - a)^{-1}(a - q)(\lambda - q)^{-1},
\]
and each of the three factors is bounded on $\Gamma$:
\begin{itemize}
\item $\|a - q\| \le \varepsilon$, by hypothesis;
\item $\|(\lambda - q)^{-1}\| \le 2$: the projection identity $(\lambda - q)^{-1} = \frac{1}{\lambda - 1}q + \frac{1}{\lambda}(1 - q)$ gives $\|(\lambda - q)^{-1}\| = \max\{|\lambda-1|^{-1}, |\lambda|^{-1}\}$, and on $\Gamma$ we have $|\lambda - 1| = 1/2$ and $|\lambda| \ge 1/2$;
\item $\|(\lambda - a)^{-1}\| \le \frac{2}{1 - 2\varepsilon}$: writing $\lambda - a = (\lambda - q)\big[1 - (\lambda-q)^{-1}(a - q)\big]$ and noting $\|(\lambda-q)^{-1}(a-q)\| \le 2\varepsilon < 1$, a Neumann series bounds the inverse bracket by $(1 - 2\varepsilon)^{-1}$, so $\|(\lambda - a)^{-1}\| \le 2(1-2\varepsilon)^{-1}$.
\end{itemize}
Hence $\|(\lambda - a)^{-1} - (\lambda - q)^{-1}\| \le \frac{2}{1-2\varepsilon}\cdot\varepsilon\cdot 2 = \frac{4\varepsilon}{1-2\varepsilon}$ on $\Gamma$. With $\mathrm{length}(\Gamma) = \pi$, the standard estimate $\big\|\frac{1}{2\pi i}\oint_\Gamma F\big\| \le \frac{1}{2\pi}(\max_\Gamma\|F\|)\,\mathrm{length}(\Gamma)$ gives
\[
\|e - q\| \le \frac{1}{2\pi}\cdot \pi \cdot \frac{4\varepsilon}{1 - 2\varepsilon} = \frac{2\varepsilon}{1 - 2\varepsilon},
\]
which is $< 1$ exactly when $\varepsilon < 1/4$.
\end{proof}

If an idempotent is close enough to a projection, the two are similar, and an explicit invertible implements the similarity.

\begin{lemma}[similarity of a close idempotent to a projection]\label{lem:sim}
Let $q$ be a projection and $e$ an idempotent with $\|e - q\| < 1$. Then $w := qe + (1-q)(1-e)$ is invertible in $A$ and $q = w e w^{-1}$.
\end{lemma}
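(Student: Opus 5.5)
The plan has two parts: first the intertwining identity, which is pure algebra, and then invertibility of $w$, which is the only place the hypothesis $\|e-q\|<1$ enters.

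\emph{Intertwining.} Using $q^2=q$ and $e^2=e$, we have
\[
wq\text{-free form: } qw = q\cdot qe + q(1-q)(1-e) = qe,
\]
and similarly
\[
we = qe\cdot e + (1-q)(1-e)e = qe .
\]
Hence $qw = we$. Once $w$ is known to be invertible, multiplying on the right by $w^{-1}$ gives $q = wew^{-1}$.

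\emph{Invertibility.} Introduce the companion element $w' := eq + (1-e)(1-q)$ and form both products. Expanding,
\[
ww' = qeq + (1-q)(1-e)(1-q), \qquad w'w = eqe + (1-e)(1-q)(1-e).
\]
Put $d := e - q$. For the first product, $qeq = q + qdq$ and $(1-q)(1-e)(1-q) = (1-q) - (1-q)d(1-q)$. Therefore
\[
ww' = 1 + qdq - (1-q)d(1-q).
\]
The term $qdq - (1-q)d(1-q)$ is a "block-diagonal" element with respect to the orthogonal decomposition given by $q$ and $1-q$. Its norm is the maximum of $\|qdq\|$ and $\|(1-q)d(1-q)\|$, so it is at most $\|d\| < 1$. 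To justify the max, apply the $C^*$-identity to $x = qdq - (1-q)d(1-q)$: the cross terms of $x^*x$ vanish because $q(1-q)=0$, so $x^*x = qd^*qdq + (1-q)d^*(1-q)d(1-q)$, a sum of positive elements with orthogonal supports. By the Neumann series, $ww'$ is invertible, so $w$ has a right inverse.

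\emph{Left inverse, the main obstacle.} Here $e$ is not self-adjoint, so $1-e$ is not orthogonal to $e$ and the block argument does not apply verbatim to $w'w$. I would bypass this by running the same computation with the roles of $e$ and $q$ exchanged in the other factor. Set $w'' := eq + (1-e)(1-q)$ and compute $w''w$ directly:
\[
w''w = eqe + (1-e)(1-q)(1-e) = e - e d e + (1-e) + (1-e)d(1-e) = 1 - ede + (1-e)d(1-e),
\]
using $eqe = e - ede$ and $(1-e)(1-q)(1-e) = (1-e) + (1-e)d(1-e)$.

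An alternative is to note that $ww'$ invertible means $w$ is right invertible, and then show left invertibility via $w'w$. Observe that $w'w$ commutes with $e$ and $ww'$ commutes with $q$. The cleanest route is the identity
\[
w'w = 1 - (e-q)^2 = ww',
\]
which holds by a direct expansion of $(e-q)^2 = e - eq - qe + q$. Since $\|(e-q)^2\| \le \|e-q\|^2 < 1$, the element $1-(e-q)^2$ is invertible, so $w$ is two-sided invertible.

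I would carry out the expansions of $ww'$ and $w'w$ against $1-(e-q)^2$ carefully. That verification is the only genuine check in the argument, and it is the cleanest path: it makes the earlier block-norm computation unnecessary.
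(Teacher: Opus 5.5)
Your proof is correct, but it reaches invertibility by a different route from the paper.

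\textbf{The paper's route.} The paper rewrites $w = 1 + (2q-1)(e-q)$. Since $2q-1$ is a self-adjoint unitary, $\|w-1\| \le \|e-q\| < 1$, and a Neumann series applied to $w$ itself gives invertibility.

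\textbf{Your route.} You pair $w$ with $w' = eq + (1-e)(1-q)$ and use the identity $ww' = w'w = 1-(e-q)^2$. Both of your expansions do reduce to $1 - e - q + eq + qe$. Since $\|(e-q)^2\| \le \|e-q\|^2 < 1$, the element $1-(e-q)^2$ is invertible. Hence $w'(1-(e-q)^2)^{-1}$ is a right inverse of $w$ and $(1-(e-q)^2)^{-1}w'$ is a left inverse. Your intertwining $qw = qe = we$ is the same as the paper's.

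\textbf{What each approach buys.} The paper's argument is a one-liner, but it genuinely uses $q = q^*$. For a non-self-adjoint idempotent $q$, the norm $\|2q-1\|$ can be large, and the same estimate would only give invertibility when $\|e-q\| < \|2q-1\|^{-1}$. Your argument is the standard Banach-algebra one. The identity $ww' = w'w = 1-(e-q)^2$ holds for any two idempotents in any unital ring, and no adjoint or $C^*$-norm is used. The hypothesis could even be weakened to invertibility of $1-(e-q)^2$, for instance $\|(e-q)^2\| < 1$. As a bonus you get an explicit formula for $w^{-1}$.

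\textbf{Tidying the write-up.} Lead with the identity and display both expansions. Drop the two detours:
\begin{enumerate}
\item the block-diagonal estimate for $ww'$;
\item the computation with $w''$, which is literally the same element as $w'$, and whose expression $1 - ede + (1-e)d(1-e)$ equals $1-(e-q)^2$ anyway.
\end{enumerate}
Neither is needed once the identity is on the page.
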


\begin{proof}
Expanding, $w = 1 + 2qe - q - e$, while $(2q - 1)(e - q) = 2qe - 2q - e + q = 2qe - q - e$; hence $w = 1 + (2q - 1)(e - q)$. Since $2q - 1$ is a self-adjoint unitary, $\|w - 1\| \le \|e - q\| < 1$, so $w$ is invertible. Directly, $we = qe = qw$ (the terms $(1-q)(1-e)e$ and $q(1-q)(1-e)$ vanish), whence $e = w^{-1} q w$, i.e.\ $q = w e w^{-1}$.
\end{proof}

\subsection{From idempotents to projections}\label{ss:convert}

The ring-theoretic input of \S\ref{sec:transfer} concerns idempotents, while pure infiniteness of $A$ is formulated for projections. The next three lemmas supply the passage, and are classical; we include proofs to keep the paper self-contained.

\begin{lemma}[Kaplansky: from an idempotent to a projection]\label{lem:kap}
Let $g \in A$ be an idempotent. Put $z := 1 - (g - g^*)^2$. Then $z \ge 1$ is invertible, commutes with $g$ and $g^*$, and $r := g g^* z^{-1}$ is a projection satisfying
\[
gr = r, \qquad rg = g, \qquad rA = gA, \qquad g \sr r .
\]
\end{lemma}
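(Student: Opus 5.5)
The plan is a direct computation around the self-adjoint element $h := g - g^*$. Since $h^* = -h$, the element $-h^2 = h^*h$ is positive, so $z = 1 + h^*h \ge 1$. Thus $\spec(z) \subseteq [1,\infty)$, $z$ is invertible, and $z^{-1}$ is positive with $\|z^{-1}\| \le 1$.

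\emph{Commutation.} I expand $h^2$. We have $(g-g^*)^2 = g - gg^* - g^*g + g^*$, because $g^2 = g$ and $(g^*)^2 = g^*$. Hence
\[
z = 1 - g - g^* + gg^* + g^*g .
\]
Multiplying out with $g^2 = g$ gives
\[
gz = g - g - gg^* + gg^* + gg^*g = gg^*g ,
\]
and symmetrically
\[
zg = g - g - g^*g + gg^*g + g^*g = gg^*g .
\]
So $gz = zg$. Taking adjoints, $g^*$ commutes with the self-adjoint element $z$, and therefore $g$ and $g^*$ both commute with $z^{-1}$.

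\emph{$r$ is a projection.} Since $z^{-1}$ commutes with $g$ and $g^*$, we get $r^* = z^{-1}gg^* = gg^*z^{-1} = r$. For idempotency, $r^2 = gg^*gg^*z^{-2}$, so it suffices to show $gg^*gg^* = gg^*z$. This follows from the computation above: $gg^*z = g(zg)^* = g(gg^*g)^* = gg^*gg^*$, using $zg = gg^*g$ and $z^* = z$. Hence $r^2 = gg^*z\cdot z^{-2} = r$.

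\emph{The relations.} First, $gr = g\,gg^*z^{-1} = r$. Next,
\[
rg = gg^*z^{-1}g = gg^*g\,z^{-1} = gz\,z^{-1} = g ,
\]
using $gz = gg^*g$. From $r = gr$ we get $rA \subseteq gA$, and from $g = rg$ we get $gA \subseteq rA$; hence $rA = gA$.

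For $g \sr r$, take $s := r$ and $t := g$. Then $st = rg = g$ and $ts = gr = r$, which is exactly the definition of $\sr$.

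The only step needing care is the identity $gg^*gg^* = gg^*z$, which I have reduced to $zg = gg^*g$. The expansion of $z$ is where an error could slip in, so that computation is the one to check carefully.
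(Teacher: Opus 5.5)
Your proof is correct and follows the paper's argument essentially step for step: expand $(g-g^*)^2$, show $gz = zg = gg^*g$, then read off $r^* = r$, $r^2 = r$ and the relations with $s = r$, $t = g$; the one small improvement is that you obtain $gg^*gg^* = gg^*z$ as $g(zg)^*$ rather than by re-expanding. One wording slip: $h = g - g^*$ is skew-adjoint, not self-adjoint, although your computation correctly uses $h^* = -h$.
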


\begin{proof}
$g - g^*$ is skew-adjoint, so $(g - g^*)^2 = -(g - g^*)(g - g^*)^* \le 0$ and $z \ge 1$; in particular $z$ is invertible and self-adjoint. Using $g^2 = g$ and $(g^*)^2 = g^*$:
\[
(g - g^*)^2 = g + g^* - g g^* - g^* g .
\]
Then $g(g - g^*)^2 = g + g g^* - g g^* - g g^* g = g - g g^* g$ and $(g - g^*)^2 g = g + g^* g - g g^* g - g^* g = g - g g^* g$; these agree, so $zg = gz$, and taking adjoints (using $z = z^*$) gives $z g^* = g^* z$. Hence $z^{-1}$ also commutes with $g$ and $g^*$.

\emph{$r^* = r$:} since $z^{-1}$ commutes with $g g^*$, $r^* = z^{-1} g g^* = g g^* z^{-1} = r$.

\emph{$r^2 = r$:} it suffices to show $g g^* g g^* = g g^* z$. Compute, using $g g^* g^* = g g^*$ and $g g^* g^* g = g g^* g$:
\[
g g^* z = g g^* - g g^*(g + g^* - g g^* - g^* g) = g g^* - g g^* g - g g^* + g g^* g g^* + g g^* g = g g^* g g^* .
\]
Hence $r^2 = g g^* z^{-1} g g^* z^{-1} = (g g^* g g^*) z^{-2} = g g^* z \cdot z^{-2} = r$.

\emph{$rg = g$:} $rg = g g^* g\, z^{-1}$, and from $g(g - g^*)^2 = g - g g^* g$ we get $gz = g - g(g - g^*)^2 = g g^* g$, so $rg = gz\, z^{-1} = g$.

\emph{$gr = r$:} $gr = g \cdot g g^* z^{-1} = g g^* z^{-1} = r$.

\emph{$rA = gA$:} $r = gr \in gA$ and $g = rg \in rA$.

\emph{$g \sr r$:} take $s = r$, $t = g$; then $st = rg = g$ and $ts = gr = r$.
\end{proof}

\begin{lemma}[transitivity of $\sr$ on idempotents]\label{lem:trans}
If $p_1 \sr p_2$ and $p_2 \sr p_3$ (all idempotents), then $p_1 \sr p_3$.
\end{lemma}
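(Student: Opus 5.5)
We will prove the statement directly from Definition~\ref{def:ring}(2), by composing the two pairs of witnesses. By hypothesis there are $s_1,t_1 \in R$ with $s_1 t_1 = p_1$ and $t_1 s_1 = p_2$, and $s_2,t_2 \in R$ with $s_2 t_2 = p_2$ and $t_2 s_2 = p_3$. We set
\[
s := s_1 s_2, \qquad t := t_2 t_1 ,
\]
and check the two required identities $st = p_1$ and $ts = p_3$.

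The key preliminary step is to normalize the witnesses so that they absorb the relevant idempotents. Replacing $s_1$ by $p_1 s_1 p_2$ and $t_1$ by $p_2 t_1 p_1$ does not change the products. For instance,
\[
p_1 s_1 p_2 \cdot p_2 t_1 p_1 = s_1 t_1 s_1 t_1 s_1 t_1 = p_1^3 = p_1 ,
\]
after writing $p_1 = s_1t_1$ and $p_2 = t_1 s_1$. Symmetrically, the product in the other order is $p_2^3 = p_2$. We may therefore assume $s_1 \in p_1 R p_2$ and $t_1 \in p_2 R p_1$, and likewise $s_2 \in p_2 R p_3$ and $t_2 \in p_3 R p_2$.

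With the witnesses normalized, we compute
\[
st = s_1 (s_2 t_2) t_1 = s_1 p_2 t_1 = s_1 t_1 = p_1 ,
\]
using $s_1 p_2 = s_1$. Similarly,
\[
ts = t_2 (t_1 s_1) s_2 = t_2 p_2 s_2 = t_2 s_2 = p_3 ,
\]
using $t_2 p_2 = t_2$. This gives $p_1 \sr p_3$.

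The only point requiring care is the normalization step: without it the middle factor $p_2$ cannot be absorbed. This is routine once one notices that $s_1 = p_1 s_1 = s_1 p_2$ already holds up to the rewriting above, since $p_1 s_1 p_2 = s_1 t_1 s_1 t_1 s_1$, which is exactly what the cube computation uses. No analytic input is needed, so the lemma holds in any ring, in particular in $\Az$ and in $A$.
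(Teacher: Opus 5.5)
Your proof is correct and uses exactly the paper's witnesses: $s_1s_2$ and $t_2t_1$ are the paper's $su$ and $vt$. The normalization step is superfluous, though, and your remark that $p_2$ cannot be absorbed without it is mistaken: the paper absorbs it directly via $s_1p_2t_1 = s_1(t_1s_1)t_1 = (s_1t_1)^2 = p_1$ and $t_2p_2s_2 = t_2(s_2t_2)s_2 = (t_2s_2)^2 = p_3$ (the detour does not affect the validity of your argument).
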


\begin{proof}
Say $st = p_1$, $ts = p_2$, $uv = p_2$, $vu = p_3$. Then
$(su)(vt) = s(uv)t = s p_2 t = s(ts)t = (st)^2 = p_1$
and
$(vt)(su) = v(ts)u = v p_2 u = v(uv)u = (vu)^2 = p_3$.
\end{proof}

\begin{lemma}[ring equivalence of projections upgrades to Murray--von Neumann equivalence]\label{lem:mvn}
Let $q, r \in A$ be projections with $q \sr r$. Then $q \sMvN r$.
\end{lemma}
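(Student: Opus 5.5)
Suppose $st = q$ and $ts = r$ with $s,t \in A$. The first step is to normalize the witnesses so that they sit in the right corners. Put $s' := qsr$ and $t' := rtq$. From $st = q$ and $ts = r$ we get $qs = sts = sr$ and $rt = tst = tq$, so $s = qs = sr$ and $t = rt = tq$ after replacing $s,t$ by $qsr$ and $rtq$. Explicitly, $s'$ satisfies $s't' = qsr\,rtq = q(s r t)q = q(stst)q = q$, and similarly $t's' = r$. So we may assume $s \in qAr$ and $t \in rAq$, with $st = q$ and $ts = r$.

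The second step is to produce a partial isometry by polar decomposition of $s$ inside the corner. Consider $s^*s \in rAr$. We want it invertible in the unital $C^*$-algebra $rAr$, with unit $r$. From $ts = r$ we get $r = s^*t^*ts\cdots$; more usefully, $r = r^*r = s^*t^*ts \le \|t\|^2 s^*s$. Hence $s^*s \ge \|t\|^{-2} r$ in $rAr$, so $s^*s$ is invertible there; note that $r\neq0$ may be assumed, since otherwise $q = st = s r t = 0$ and the claim is trivial. Let $|s| := (s^*s)^{1/2}$ and let $|s|^{-1}$ be its inverse in $rAr$, computed by continuous functional calculus on the spectrum in $rAr$, which lies in $[\|t\|^{-1},\|s\|]$ together with the unit $r$. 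Set $v := s|s|^{-1}$. Then
\[
v^*v = |s|^{-1}s^*s|s|^{-1} = r .
\]

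The third step, and the only place needing care, is to check $vv^* = q$. Since $vv^*$ is a projection (because $v$ is a partial isometry) and $vv^* = s|s|^{-2}s^* = s(s^*s)^{-1}s^* \le$ something in $qAq$, it is clearly a subprojection of $q$, as $s \in qA$. For the reverse inequality, use $q = st$: we have $vv^*s = s(s^*s)^{-1}s^*s = sr = s$, hence $vv^*q = vv^*st = st = q$. Thus $q \le vv^*$, and therefore $vv^* = q$. So $v^*v = r$ and $vv^* = q$, which gives $r \sMvN q$, equivalently $q \sMvN r$ (take $v^*$). I expect the main obstacle to be simply the bookkeeping of the invertibility of $s^*s$ within the corner $rAr$ rather than in $A$; the operator inequality $r \le \|t\|^2 s^*s$ handles it cleanly.
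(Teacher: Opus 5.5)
Your proof is correct and follows essentially the same route as the paper: compress the witnesses to $qsr \in qAr$ and $rtq \in rAq$, then take $v = s(s^*s)^{-1/2}$ with the inverse square root computed in the corner $rAr$. The differences are cosmetic. You get invertibility of $s^*s$ in $rAr$ from the inequality $r = s^*t^*ts \le \|t\|^2 s^*s$, where the paper exhibits the explicit inverse $tt^*$. You get $vv^* = q$ by showing $vv^* \le q$ and $q \le vv^*$, where the paper computes $vv^* = s\,tt^*s^* = q$ directly.
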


\begin{proof}
Choose $s, t$ with $st = q$, $ts = r$, and set $s' := q s r$, $t' := r t q$. Then, using $q = st$ and $r = ts$:
\[
s' t' = q s (r t) q = q s (t s t) q = q (st)(st) q = q, \qquad
t' s' = r t (q s) r = r t (s t s) r = r (ts)(ts) r = r .
\]
Note $s' = q s' = s' r$ and $t' = r t' = t' q$. Put $x := s'^* s' \in rAr$, so $x \ge 0$. Then $x$ is invertible in the corner $rAr$ with inverse $t' t'^*$: indeed
\[
x\,(t' t'^*) = s'^* (s' t') t'^* = s'^* q\, t'^* = s'^* t'^* = (t' s')^* = r,
\]
and symmetrically $(t' t'^*)\, x = t' (s' t')^* s' = t' q s' = t' s' = r$. Let $x^{-1/2}$ be computed by functional calculus in the unital $C^*$-algebra $rAr$, and set $v := s' x^{-1/2}$. Then
\[
v^* v = x^{-1/2} x\, x^{-1/2} = r, \qquad
v v^* = s' x^{-1} s'^* = s' (t' t'^*) s'^* = (s' t')(s' t')^* = q q^* = q .
\]
So $q \sMvN r$.
\end{proof}

\subsection{Transporting infiniteness}\label{ss:transport}

Combining the two previous subsections gives the statement actually invoked in Step 3 of Theorem~\ref{thm:transfer}: an idempotent that is infinite in the ring sense yields a projection that is infinite in the $C^*$ sense.

\begin{lemma}[a projection that is an infinite idempotent is an infinite projection]\label{lem:transfer}
Let $q \in A$ be a projection which is infinite as an idempotent of the ring $A$. Then $q$ is an infinite projection of the $C^*$-algebra $A$.
\end{lemma}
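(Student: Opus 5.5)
By hypothesis there is an idempotent $f \in A$ with $q \sr f \lr q$, i.e.\ $qf = fq = f$, $f \ne q$, and $st = q$, $ts = f$ for some $s,t \in A$. The plan is to replace $f$ by a projection $r$ that is still ring-equivalent to $q$ and still lies strictly below $q$. Lemma~\ref{lem:mvn} will then upgrade $q \sr r$ to $q \sMvN r$, and $r \le q$ with $r \ne q$ will exhibit $q$ as an infinite projection.

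First apply Kaplansky's construction, Lemma~\ref{lem:kap}, to $g := f$. This gives a projection $r = ff^*z^{-1}$ with $fr = r$, $rf = f$, $rA = fA$ and $f \sr r$. Lemma~\ref{lem:trans}, applied to $q \sr f$ and $f \sr r$, yields $q \sr r$, and Lemma~\ref{lem:mvn} then gives $q \sMvN r$.

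It remains to check that $r \le q$ and $r \ne q$.
\begin{itemize}
\item From $qf = f$ and $r = fr$ we get $qr = qfr = fr = r$. Since $q$ and $r$ are self-adjoint, taking adjoints gives $rq = r$. Hence $r \le q$ as projections.
\item Suppose $r = q$. Then $f = rf = qf$ holds anyway, but also $q = r = fr = fq = f$, contradicting $f \ne q$. Hence $r \ne q$.
\end{itemize}
Together with $q \sMvN r$, this shows that $q$ is infinite in the sense of Definition~\ref{def:cstar}(2).

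The only delicate point is the second bullet. Kaplansky's projection replaces $f$ by something with the same right ideal, and one must confirm that this neither escapes from under $q$ nor collapses onto $q$. The identities $fr = r$ and $rf = f$ from Lemma~\ref{lem:kap}, together with $qf = fq = f$, are exactly what settles this. The rest is a direct chaining of Lemmas~\ref{lem:kap}, \ref{lem:trans} and~\ref{lem:mvn}.
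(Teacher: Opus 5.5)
Your proposal is correct and follows the paper's proof: Kaplansky's projection, then transitivity of $\sr$, then the upgrade to $q \sMvN r$ via Lemma~\ref{lem:mvn}. The only difference is that you check $r \le q$ and $r \ne q$ directly from $fr = r$ and $qf = fq = f$. The paper instead routes both checks through the right-ideal identity $rA = fA$, so your version is slightly more economical.
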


\begin{proof}
The argument is the standard idempotent-to-projection bookkeeping (cf.\ \cite[Ch.~3]{RLL}), included for completeness. Choose an idempotent $g$ with $q \sr g \lr q$, i.e.\ $gq = qg = g$ and $g \ne q$. By Lemma~\ref{lem:kap} there is a projection $r$ with $gr = r$, $rg = g$, $rA = gA$, and $g \sr r$. By Lemma~\ref{lem:trans}, $q \sr r$, and by Lemma~\ref{lem:mvn}, $q \sMvN r$.

\emph{$r \le q$:} from $qg = g$ we get $gA \subseteq qA$, and $r = gr \in gA \subseteq qA$, so $qr = r$, i.e.\ $r \le q$.

\emph{$r \ne q$:} if $r = q$ then $qA = rA = gA$, so $q \in gA$, giving $gq = q$; but $gq = g$, forcing $g = q$, a contradiction.

Thus $q \sMvN r \le q$ with $r \ne q$: $q$ is an infinite projection.
\end{proof}

Finally, infiniteness is insensitive both to enlarging the ambient ring and to conjugation, which is what allows the property to be moved from $\Az$ to $A$ and then from $e$ to $q$.

\begin{lemma}[persistence and similarity-invariance of infiniteness]\label{lem:persist}
\leavevmode
\begin{enumerate}
\item If $e \in \Az$ is an infinite idempotent of the ring $\Az$, then $e$ is an infinite idempotent of the ring $A$.
\item If $e$ is an infinite idempotent of $A$ and $w \in A$ is invertible, then $w e w^{-1}$ is an infinite idempotent of $A$.
\end{enumerate}
\end{lemma}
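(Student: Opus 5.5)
Both parts follow directly from the definitions in Definition~\ref{def:ring}, by transporting the witnesses of infiniteness.

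For (1), suppose $e \in \Az$ is infinite in $\Az$. Then there are an idempotent $f \in \Az$ and elements $s,t \in \Az$ with $st = e$, $ts = f$, $ef = fe = f$ and $f \ne e$. All of these identities are equations in the multiplication of $A$ restricted to $\Az$. The same $s,t,f$ therefore witness $e \sr f \lr e$ in the ring $A$. The only point to check is that the ambient operations and identities agree, and $\Az$ is a subring. The unit is not even used here, though Lemma~\ref{lem:unit} guarantees consistency in any case. So $e$ is infinite in $A$.

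For (2), let $e \sr f \lr e$ in $A$, witnessed by $st = e$, $ts = f$, $ef = fe = f$ and $f \ne e$. Put $e' := wew^{-1}$ and $f' := wfw^{-1}$. Conjugation $\phi(x) = wxw^{-1}$ is a ring automorphism of $A$, so it preserves idempotency and both relations $\sr$ and $\ler$. Concretely, take $s' := wsw^{-1}$ and $t' := wtw^{-1}$. Then $s't' = wstw^{-1} = e'$ and $t's' = f'$. Also $e'f' = wefw^{-1} = f'$, and likewise $f'e' = f'$. Finally $f' \ne e'$ because $\phi$ is injective. Hence $e'$ is infinite.

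There is no real obstacle. The only care needed is to note explicitly that conjugation is a unital ring automorphism, so that it carries each defining equation to the corresponding equation.
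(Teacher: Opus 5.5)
Your proof is correct and matches the paper's argument: in (1) the same witnesses $s,t,f \in \Az$ satisfy the defining relations in $A$, and in (2) you apply the ring automorphism $x \mapsto wxw^{-1}$ to $f$, $s$, $t$. Your explicit checks of each transported relation, including $f' \ne e'$ by injectivity, simply spell out what the paper leaves implicit.
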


\begin{proof}
(1) The elements $f, s, t \in \Az$ (with $st = e$, $ts = f$, $ef = fe = f$, $f \ne e$) satisfy the same relations in $A$, and $f \ne e$ persists. (2) $x \mapsto w x w^{-1}$ is a ring automorphism of $A$; apply it to $f$, $s$, $t$.
\end{proof}

\section{Smooth dense subrings}\label{sec:hfc}

We do not assume $\Az$ is self-adjoint or a priori an algebra over $\CC$; both are shown to follow from hfc-closure (Lemma~\ref{lem:auto}). This section collects what we need about hfc-closed subrings: two elementary consequences of the definition, a workable criterion for verifying hfc-closure, and two examples showing that the condition is restrictive. Pure infiniteness plays no role here.

\subsection{Elementary consequences}

\begin{lemma}[hfc-closed subrings are $\CC$-subalgebras]\label{lem:auto}
A nonzero hfc-closed subring $\Az \subseteq A$ contains $\CC 1_A$ and is closed under scalar multiplication. Hence it is a unital $\CC$-subalgebra of $A$, and the hypothesis ``subring'' in our results is no weaker than ``subalgebra''.
\end{lemma}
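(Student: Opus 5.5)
The plan is to read off both conclusions directly from Definition~\ref{def:hfc}, applied to the simplest holomorphic functions: constants and linear functions. The ambient setting is a unital $A$ and a nonzero subring $\Az$; we do not need to assume $\Az$ unital, since the definition alone will force $1_A \in \Az$. The only input from the functional calculus is its normalization: for $f$ holomorphic on a neighbourhood of $\spec_A(a)$, the Riesz--Dunford integral of the constant function $f \equiv \mu$ equals $\mu 1_A$, and that of $f(z) = \mu z$ equals $\mu a$. These are the standard identities $\frac{1}{2\pi i}\oint_\Gamma (\lambda - a)^{-1}\,d\lambda = 1_A$ and $\frac{1}{2\pi i}\oint_\Gamma \lambda(\lambda - a)^{-1}\,d\lambda = a$, for a contour $\Gamma$ enclosing $\spec_A(a)$. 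I would cite these rather than re-derive them. If a derivation is wanted, expand $(\lambda - a)^{-1} = \sum_n a^n \lambda^{-n-1}$ on a large circle $|\lambda| = R > \|a\|$ and integrate term by term.

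\emph{Step 1: scalars.} Because $\Az$ is a subring, it contains $0$. Take $a = 0$, so that $\spec_A(0) = \{0\}$, and take $f \equiv \mu$ for an arbitrary $\mu \in \CC$. The function $f$ is entire, so hfc-closure gives $f(0) = \mu 1_A \in \Az$. In particular $1_A \in \Az$ and $\CC 1_A \subseteq \Az$.

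\emph{Step 2: scalar multiplication.} There are two routes, and I would give the first while noting the second. For the first, given $a \in \Az$ and $\mu \in \CC$, apply hfc-closure to the entire function $f(z) = \mu z$; this gives $f(a) = \mu a \in \Az$. For the second, combine Step~1 with closure under ring multiplication: $\mu a = (\mu 1_A)\,a$ is a product of two elements of $\Az$.

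\emph{Step 3: conclusion.} Additive closure together with Step~2 makes $\Az$ a complex vector subspace of $A$. It is closed under multiplication and, by Step~1, contains $1_A$, so it is a unital $\CC$-subalgebra. This also settles the final clause of the statement, and it is consistent with Lemma~\ref{lem:unit}.

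I expect no real obstacle. The only point needing care is that the Riesz--Dunford calculus really does send constants to multiples of $1_A$ and the identity function to $a$, which is the normalization above. I would also remark that the hypothesis ``nonzero'' is used only to rule out degenerate conventions; the zero-element argument in Step~1 works for any subring.
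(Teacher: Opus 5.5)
Your proof is correct and is essentially the paper's: both apply hfc-closure to constant functions, giving $\CC 1_A \subseteq \Az$, and to $z \mapsto \mu z$, giving $\mu a \in \Az$. Your evaluation at $a = 0$, instead of at an arbitrary $a \in \Az$, is a harmless refinement that correctly shows the ``nonzero'' hypothesis is never used.
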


\begin{proof}
Fix $a \in \Az$. The constant function $f \equiv \lambda$ is holomorphic on a neighbourhood of $\spec_A(a)$ and $f(a) = \lambda 1_A$, so $\CC 1_A \subseteq \Az$; the function $g(z) = \lambda z$ gives $g(a) = \lambda a \in \Az$.
\end{proof}

\begin{lemma}\label{lem:implications}
Let $\Az$ be a dense unital subring of $A$.
\begin{enumerate}
\item If $\Az$ is hfc-closed then $\Az$ is inverse-closed.
\item $\Az$ is inverse-closed if and only if $\spec_{\Az}(a) = \spec_A(a)$ for every $a \in \Az$.
\end{enumerate}
\end{lemma}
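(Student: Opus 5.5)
For (1), the plan is to invert with a holomorphic function that never leaves $\Az$. Take $a \in \Az \cap \Inv(A)$. Then $0 \notin \spec_A(a)$, and $\spec_A(a)$ is compact, so it has an open neighbourhood $U$ avoiding $0$; for instance $U = \{\lambda : |\lambda| > \delta\}$ with $\delta < \dist(0,\spec_A(a))$. On $U$ the function $f(\lambda) = \lambda^{-1}$ is holomorphic. Hfc-closure (Definition~\ref{def:hfc}) gives $f(a) \in \Az$. Because the Riesz--Dunford calculus is multiplicative and $\lambda \cdot f(\lambda) = 1$ on $U$, we get $a f(a) = f(a) a = 1_A$. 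So $f(a) = a^{-1}$, and $a^{-1} \in \Az$. By Lemma~\ref{lem:unit}, $1_A = 1_{\Az}$, so this inverse is also the inverse of $a$ in the ring $\Az$. The only point to check is that the calculus sends $\lambda \mapsto \lambda$ to $a$ and $1 \mapsto 1_A$; this is standard.

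For (2) I will first prove the inclusion $\spec_A(a) \subseteq \spec_{\Az}(a)$ for every $a \in \Az$, without any hypothesis. If $\lambda 1 - a$ has an inverse $b \in \Az$, then $b$ also inverts $\lambda 1 - a$ in $A$, again because the units agree by Lemma~\ref{lem:unit}.

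There is a preliminary point: $\lambda 1 - a$ must lie in $\Az$, which requires $\CC 1 \subseteq \Az$. In (1) this follows from Lemma~\ref{lem:auto}. In general I will read $\spec_{\Az}$ as in Definition~\ref{def:invclosed}, which presupposes that $\Az$ is a unital algebra; this is how the paper uses the notion.

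\emph{Forward direction.} Assume $\Az$ is inverse-closed, and let $\lambda \notin \spec_A(a)$. Then $\lambda 1 - a \in \Az \cap \Inv(A)$, so its inverse lies in $\Az$ and $\lambda \notin \spec_{\Az}(a)$. Together with the inclusion above, this gives equality of the two spectra.

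\emph{Converse.} Assume the spectra agree for every element of $\Az$, and take $a \in \Az \cap \Inv(A)$. Then $0 \notin \spec_A(a) = \spec_{\Az}(a)$. So $-a$ is invertible in $\Az$, hence $a$ is as well, and by uniqueness of inverses in $A$ this inverse is $a^{-1}$.

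There is no real obstacle in any of these steps. The only care needed is the bookkeeping of units and scalars, handled by Lemmas~\ref{lem:unit} and \ref{lem:auto}.
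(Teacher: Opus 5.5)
Your proof is correct and follows the paper's argument essentially step for step: in (1) you apply hfc-closure to $f(z)=z^{-1}$, and in (2) you combine the automatic inclusion $\spec_A(a)\subseteq\spec_{\Az}(a)$ with inverse-closedness applied to $\lambda 1 - a$, and to $a$ itself for the converse. You flag that (2) tacitly needs $\CC 1\subseteq\Az$, and this is a genuine point that matches the paper's convention of defining $\spec_R$ only for unital algebras; read literally for a subring that is not a $\CC$-algebra, such as $\mathbb{Q}(i)\subseteq\CC$, the forward direction would fail.
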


\begin{proof}
(1) If $a \in \Az$ is invertible in $A$ then $0 \notin \spec_A(a)$, so $f(z) = z^{-1}$ is holomorphic on a neighbourhood of $\spec_A(a)$ and $f(a) = a^{-1} \in \Az$.

(2) The inclusion $\spec_A(a) \subseteq \spec_{\Az}(a)$ always holds, since an inverse lying in $\Az$ lies in $A$. If $\Az$ is inverse-closed and $\lambda \notin \spec_A(a)$, then $\lambda 1 - a$ is an element of $\Az$ invertible in $A$, so its inverse lies in $\Az$ and $\lambda \notin \spec_{\Az}(a)$; this gives the reverse inclusion. Conversely, if the spectra agree and $a \in \Az \cap \Inv(A)$, then $0 \notin \spec_A(a) = \spec_{\Az}(a)$, so $a^{-1} \in \Az$.
\end{proof}

The converse of \ref{lem:implications}(1) fails for general subrings: the Riesz--Dunford integral converges in the norm of $A$, and without completeness of $\Az$ in a suitable finer topology its value may escape $\Az$. Completeness is exactly what repairs this.

\subsection{The Fr\'echet criterion}

\begin{definition}[Fr\'echet subalgebra]\label{def:frechet}
A \emph{Fr\'echet algebra} is a complex algebra $B$ equipped with a topology that is
\begin{enumerate}
\item locally convex and Hausdorff, generated by a \emph{countable} family of seminorms $\|\cdot\|_0, \|\cdot\|_1, \dots$ (equivalently: metrizable, by $d(x,y) = \sum_k 2^{-k}\|x-y\|_k(1+\|x-y\|_k)^{-1}$);
\item complete;
\end{enumerate}
and for which multiplication is jointly continuous, i.e.\ for each $k$ there are $C_k > 0$ and $m_k$ with $\|xy\|_k \le C_k \sum_{i,j \le m_k}\|x\|_i\|y\|_j$ \cite[Def.~1.1]{Schw92}.

If $A$ is a Banach algebra, a \emph{Fr\'echet subalgebra} of $A$ is a subalgebra $\Az \subseteq A$ carrying a Fr\'echet algebra topology $\tau$ that is \emph{finer} than the norm topology of $A$; equivalently, the inclusion $(\Az,\tau) \hookrightarrow A$ is continuous \cite[Def.~1.1]{Schw92}.
\end{definition}

\begin{remark}\label{rem:why-countable}
Countability in (1) is not a formality: it is equivalent to metrizability, and metrizability is what the proof of Theorem~\ref{thm:schweitzer} consumes, through facts (P) and (O) below --- both of which are statements about completely metrizable spaces and are false for general topological groups.
\end{remark}

\begin{theorem}[Schweitzer]\label{thm:schweitzer}
Let $A$ be a unital Banach algebra and $\Az \subseteq A$ a dense unital Fr\'echet subalgebra in the sense of Definition~\ref{def:frechet}. Then
\[
\Az \text{ is inverse-closed in } A \iff \Az \text{ is hfc-closed in } A .
\]
\end{theorem}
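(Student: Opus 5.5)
The easy direction, hfc-closed $\Rightarrow$ inverse-closed, is Lemma~\ref{lem:implications}(1), so the work is entirely in the converse. The plan is to show that every element produced by the Riesz--Dunford integral is a limit of elements of $\Az$ in the Fr\'echet topology $\tau$, and then to use completeness of $(\Az,\tau)$ together with continuity of the inclusion into $A$ to conclude that the limit lies in $\Az$ and coincides with the $A$-valued integral.

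\textbf{Step 1: the invertibles of $\Az$ form an open set.} Since $\Az$ is inverse-closed, $\Inv(\Az) = \Az \cap \Inv(A)$. The latter is the preimage of the norm-open set $\Inv(A)$ under the continuous inclusion $(\Az,\tau) \hookrightarrow A$, hence $\tau$-open.

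\textbf{Step 2: inversion is $\tau$-continuous on $\Inv(\Az)$.} This is what the facts (P) and (O) of Remark~\ref{rem:why-countable} are for. The map $\Inv(\Az) \to \Inv(\Az)$, $x \mapsto x^{-1}$, has closed graph in $\Inv(\Az)\times\Inv(\Az)$. Indeed, if $x_n \to x$ and $x_n^{-1} \to y$ in $\tau$, then the same limits hold in norm, so $y = x^{-1}$. An open subset of a complete metric space is completely metrizable, so $\Inv(\Az)$ is a completely metrizable topological space. It is a group with jointly continuous multiplication by the Fr\'echet inequality.

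I would then invoke the closed graph theorem for completely metrizable (Polish-type) groups, or more concretely Ellis/Arens-type results: a semitopological group that is \v{C}ech-complete is a topological group. This gives continuity of inversion. This is the step I expect to be the main obstacle, since it is exactly where metrizability and completeness are consumed. If the abstract route is awkward, I would fall back on a Baire category argument. Write $\Inv(\Az)$ near $1$ as a countable union of the closed sets $\{x : \|x^{-1}\|_k \le N\}$. Then Baire gives a neighbourhood of some point on which each seminorm of the inverse is bounded. Translating by group multiplication and using the identity $x^{-1}-y^{-1} = x^{-1}(y-x)y^{-1}$ upgrades local boundedness to continuity.

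\textbf{Step 3: the Riesz--Dunford integral lies in $\Az$.} Fix $a \in \Az$, $f$ holomorphic near $\spec_A(a)$, and a contour $\Gamma$ in the domain of $f$ surrounding $\spec_A(a)$. By Lemma~\ref{lem:implications}(2), $\spec_{\Az}(a) = \spec_A(a)$, so $\lambda \mapsto (\lambda - a)^{-1}$ maps $\Gamma$ into $\Az$. By Steps~1--2 this map is $\tau$-continuous on the compact set $\Gamma$. Riemann sums of $\frac{1}{2\pi i}\oint_\Gamma f(\lambda)(\lambda-a)^{-1}\,d\lambda$ therefore form a $\tau$-Cauchy net: this follows from uniform continuity in each seminorm, with local convexity used to estimate the sums. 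By completeness they converge in $\tau$ to some $b \in \Az$. By continuity of the inclusion, the same Riemann sums converge in norm to $f(a)$, and the norm topology is Hausdorff, so $f(a) = b \in \Az$.

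Density of $\Az$ is not needed beyond the ambient setup; only inverse-closure, completeness, countability of the seminorms and the continuity of the inclusion enter.
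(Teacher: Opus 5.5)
Your proposal is correct and follows the paper's proof essentially step for step: $\Inv(\Az)=\Az\cap\Inv(A)$ is $\tau$-open, complete metrizability together with joint continuity of multiplication gives $\tau$-continuity of inversion and hence a $\tau$-continuous resolvent on $\Gamma$, and completeness of $\tau$ makes the Riemann sums converge in $\Az$ to an element which the continuous inclusion identifies with $f(a)$. The only difference is in Step~2, where your Ellis/Bouziad-type citation (\v{C}ech-complete semitopological groups are topological groups) does the job of the paper's fact (P); the Baire fallback is unnecessary, and as sketched it would also need the sets $\{x:\|x^{-1}\|_k\le N\}$ to be $\tau$-closed, which is not automatic because $x_n\to x$ in $\tau$ a priori only gives $x_n^{-1}\to x^{-1}$ in the norm of $A$.
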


The implication $(\Leftarrow)$ is Lemma~\ref{lem:implications}(1) and needs no hypothesis on $\tau$. For $(\Rightarrow)$, which is \cite[Lem.~1.2]{Schw92}, we record the argument in the streamlined form of \cite{Roe}; it uses two standard facts:
\begin{itemize}
\item[(P)] a group with a completely metrizable topology and jointly continuous multiplication has continuous inversion \cite{Pfister};
\item[(O)] an open subset of a completely metrizable space is completely metrizable \cite[Ch.~24]{Willard}.
\end{itemize}

\begin{proof}[Proof of $(\Rightarrow)$]
\emph{Step 1: inversion is $\tau$-continuous.} Inverse-closedness gives $\Inv(\Az) = \Az \cap \Inv(A)$. As $\Inv(A)$ is open in $A$ and the inclusion $\Az \hookrightarrow A$ is $\tau$-continuous, $\Inv(\Az)$ is $\tau$-open in $\Az$; by (O) it is completely metrizable. Multiplication is jointly $\tau$-continuous, so (P) applies: $g \mapsto g^{-1}$ is $\tau$-continuous on $\Inv(\Az)$.

\emph{Step 2: the resolvent is $\tau$-continuous.} Let $a \in \Az$ and let $\Gamma$ be a contour enclosing $\spec_A(a)$ inside the domain of $f$. For $\lambda \in \Gamma$ the element $\lambda 1 - a$ lies in $\Az$ and is invertible in $A$, hence in $\Az$. The map $\lambda \mapsto \lambda 1 - a$ is $\tau$-continuous, and composing with Step 1 shows $\lambda \mapsto (\lambda 1 - a)^{-1}$ is continuous from $\Gamma$ into $(\Az,\tau)$.

\emph{Step 3: the integral converges in $\Az$.} The integrand $\lambda \mapsto f(\lambda)(\lambda 1 - a)^{-1}$ is $\tau$-continuous on the compact set $\Gamma$, hence uniformly so; its Riemann sums form a $\tau$-Cauchy net in $\Az$, which converges since $\tau$ is complete. The limit is $f(a)$, so $f(a) \in \Az$.
\end{proof}

\begin{remark}[where each hypothesis acts]\label{rem:roles}
Inverse-closedness identifies $\Inv(\Az)$ with $\Az \cap \Inv(A)$ and so makes it open; completeness of $\tau$ powers (P) in Step~1 and closes the integral in Step~3; continuity of the inclusion lets $\Az$ inherit the openness of $\Inv(A)$. Removing completeness breaks Step~1, which is precisely why the converse of Lemma~\ref{lem:implications}(1) fails in general.
\end{remark}

\subsection{Two non-examples}

The next two examples show that neither completeness alone nor density alone gives inverse-closedness, and that the combinatorial dense subalgebras of most interest in the algebraic theory fail it.

\begin{example}[completeness is not enough]\label{ex:free}
Let $F_2$ be the free group on generators $a,b$ and consider $\ell^1(F_2)$ inside the reduced group $C^*$-algebra $C^*_r(F_2)$. This is a dense Banach $*$-subalgebra, so it is complete in its own norm. Put
\[
h = a + a^{-1} + b + b^{-1} .
\]
Kesten's theorem \cite[Thm.~3]{Kesten} (see also \cite{AkemannOstrand}) computes the norm of the sum of the generators and their inverses under the left-regular representation of a free group: for the free group on $n$ generators this norm is $2\sqrt{2n-1}$. For $n = 2$ this gives $\|h\|_{C^*_r(F_2)} = 2\sqrt3$, which is also the spectral radius since $h$ is self-adjoint. In $\ell^1(F_2)$ all coefficients of $h^n$ are nonnegative, so $\|h^n\|_1 = 4^n$ and the spectral radius is $4$. The two spectral radii differ, hence so do the two spectra, and $\ell^1(F_2)$ is not inverse-closed by Lemma~\ref{lem:implications}(2).
\end{example}

\begin{example}[Leavitt and Steinberg algebras]\label{ex:laurent}
Let $\CC[z,z^{-1}]$ be the Laurent polynomials, dense in $C(\mathbb{T})$ by Stone--Weierstrass, and put
\[
h(z) = 3 - z - z^{-1}, \qquad h(e^{i\theta}) = 3 - 2\cos\theta \ge 1 .
\]
Then $h$ is invertible in $C(\mathbb{T})$, being bounded away from $0$. Its inverse has Fourier coefficients
\[
\widehat{h^{-1}}(n) = \tfrac{1}{\sqrt5}\,\rho^{\,|n|}, \qquad \rho = \tfrac{3-\sqrt5}{2} \in (0,1),
\]
all nonzero, whereas a Laurent polynomial has finite Fourier support. So $h^{-1} \notin \CC[z,z^{-1}]$ and the subalgebra is not inverse-closed.

Since $\CC[z,z^{-1}]$ is the Leavitt path algebra of the graph with one vertex and one loop, sitting inside its graph $C^*$-algebra $C(\mathbb{T})$, this is the smallest instance of a general phenomenon: Leavitt path algebras and Steinberg algebras are typically not inverse-closed in their completions, and therefore lie outside the scope of Theorem~\ref{thm:transfer}. We return to this in Remark~\ref{rem:regimes}.
\end{example}

\section{The transfer theorem}\label{sec:transfer}

We first record the ring-theoretic input.

\begin{proposition}[Ara--Goodearl--Pardo {\cite[Prop.~1.5]{AGP}}]\label{fact:agp}
In a purely infinite simple unital ring, every nonzero idempotent is infinite.
\end{proposition}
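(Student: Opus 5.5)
The plan is to work directly from Definition~\ref{def:ring}(4), without the $xay=1$ criterion. Since $R$ is purely infinite simple, every nonzero right ideal contains an infinite idempotent. Fix a nonzero idempotent $e \in R$. Then $eR$ is a nonzero right ideal, so it contains an infinite idempotent $g$. The proof then has two steps: first move $g$ beneath $e$, then absorb the orthogonal complement $e - g'$.

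\emph{Step 1: an infinite subidempotent of $e$.} Since $g \in eR$ we have $eg = g$. Put $g' := ge$.
\begin{itemize}
\item $g'$ is idempotent: $gege = g(eg)e = g^2 e = ge$.
\item $g' \ler e$: $eg' = ege = ge = g'$ and $g'e = ge = g'$.
\item $g \sr g'$: with $s = ge$ and $t = g$ we get $st = geg = g^2 = g$ and $ts = g^2 e = ge = g'$.
\end{itemize}
Next I check that infiniteness passes along $\sr$. Suppose $st = g$, $ts = g'$ and $g \sr h_0 \lr g$. Normalize the witnesses so that $s \in gRg'$ and $t \in g'Rg$; this is the same corner-cutting as in the proof of Lemma~\ref{lem:mvn}, namely replace $s$ by $gsg'$ and $t$ by $g'tg$. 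Then $x \mapsto txs$ is multiplicative on $gRg$, since $txs\,tys = t(xgy)s$. It therefore maps $h_0$ to an idempotent $h := th_0s$ with $h \ler g'$. Conjugating the equivalence $g \sr h_0$ gives $g' \sr h$. Finally $h \neq g'$, because applying the inverse map $y \mapsto syt$ returns $h_0 \neq g$. Hence $g'$ is infinite: $g' \sr h \lr g'$.

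\emph{Step 2: absorbing the complement.} Put $k := e - g'$. It is an idempotent orthogonal to $g'$, and hence orthogonal to $h$, because $h = g'h = hg'$. Choose witnesses $s_1 \in g'Rh$ and $t_1 \in hRg'$ with $s_1t_1 = g'$ and $t_1s_1 = h$, normalized as in Step 1. Set
\[
s := s_1 + k, \qquad t := t_1 + k .
\]
All cross terms vanish by orthogonality, so $st = g' + k = e$ and $ts = h + k =: f$. Then $f$ is an idempotent with $ef = fe = f$, and $f \neq e$ because $f - e = h - g' \neq 0$. Thus $e \sr f \lr e$, so $e$ is infinite.

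The only delicate point I expect is the bookkeeping of the witnesses. They must be cut down to the correct corners, so that the cross terms in Step 2 vanish and the transport map in Step 1 is multiplicative. Once the witnesses are normalized this way, each identity is a one-line computation.
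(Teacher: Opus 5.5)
Your proof is correct and complete. The paper gives no argument for this proposition; it imports it from \cite[Prop.~1.5]{AGP}. So your write-up is a self-contained alternative to a citation rather than a variant of a proof in the text.

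Your two steps are the elementwise form of the standard module-theoretic argument.
\begin{itemize}
\item Step~1: since $gR \subseteq eR$, the direct summand $gR$ of $R$ is a direct summand of $eR$. Your idempotent $g' = ge \ler e$ with $g \sr g'$ realizes exactly this.
\item Step~2: infiniteness propagates upward. From $eR \cong gR \oplus Y$ and $gR \cong gR \oplus Z$ with $Z \neq 0$ one gets $eR \cong gR \oplus Z \oplus Y \cong eR \oplus Z$. Your $k = e - g'$ plays the role of $Y$.
\end{itemize}

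Your version does two things the citation does not.
\begin{itemize}
\item It fits the style of Section~3, where $\sr$-bookkeeping is done by hand (Lemmas~\ref{lem:trans} and~\ref{lem:mvn}).
\item It shows that simplicity is never used. The conclusion holds in any unital ring in which every nonzero right ideal contains an infinite idempotent.
\end{itemize}
The citation's advantage is brevity.

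One detail should be tightened. The map $x \mapsto txs$ is multiplicative only on $gRg$, so you cannot transport the witnesses $u,v$ of $g \sr h_0$ without comment. Either cut them down to $gRh_0$ and $h_0Rg$ first, using the same trick as in Lemma~\ref{lem:mvn}. Or check directly: using $t = tg$ and $s = gs$, one has $tus\,tvs = t(gugvg)s = ts = g'$ and $tvs\,tus = t(gvgug)s = th_0s = h$. These follow from $gu = uh_0$ and $vg = h_0v$.
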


\noindent This is the sole ring-theoretic input to Theorem~\ref{thm:transfer}; it is used in the same form in \cite[Cor.~3.1]{BCH}.

\begin{theorem}[transfer: smooth algebraic pure infiniteness $\Rightarrow$ $C^*$ pure infiniteness]\label{thm:transfer}
Let $A$ be a unital $C^*$-algebra of real rank zero and $\Az \subseteq A$ a dense, hfc-closed unital subring that is purely infinite simple as a ring. Then $A$ is a purely infinite simple $C^*$-algebra.
\end{theorem}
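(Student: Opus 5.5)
The proof checks the three clauses of Definition~\ref{def:cstar}(3) in turn: $A$ is simple, $A \ne \CC$, and every nonzero hereditary $C^*$-subalgebra contains an infinite projection. The infinite projections will come from the machinery of \S\ref{sec:idem}.

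\emph{Step 1: simplicity and $A\neq\CC$.} Suppose $J \subseteq A$ is a nonzero proper closed ideal. By Lemma~\ref{fact:rr0}, $J$ contains a nonzero projection $q$. Density gives $a \in \Az$ with $\|a-q\|<\varepsilon$, where $\varepsilon$ is small. Lemma~\ref{lem:riesz} then yields an idempotent $e \in \Az$ with $\|e-q\|<1$. Lemma~\ref{lem:sim} gives $q = wew^{-1}$, so $e = w^{-1}qw \in J$, and $e \neq 0$ because $q\neq 0$. Since $\Az$ is simple, the ideal $\Az e \Az$ equals $\Az$, so $1 = \sum x_i e y_i \in J$. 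This contradicts properness, so $A$ is simple.

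If $A = \CC$, then $\Az$ is a dense subring of $\CC$ containing $\CC 1$ by Lemma~\ref{lem:auto}. Hence $\Az = \CC$, a division ring, which is excluded by the AGP characterization. So $A \neq \CC$.

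\emph{Step 2: every nonzero projection of $A$ is infinite.} Let $q\ne 0$ be a projection of $A$. Pick $a \in \Az$ with $\|a-q\|<1/8$. Lemma~\ref{lem:riesz} gives an idempotent $e\in\Az$ with $\|e-q\|<1$. Lemma~\ref{lem:sim} gives $q = wew^{-1}$, so $e\ne 0$. By Proposition~\ref{fact:agp}, $e$ is an infinite idempotent of $\Az$. Lemma~\ref{lem:persist}(1) makes it infinite in $A$, and Lemma~\ref{lem:persist}(2) makes $q = wew^{-1}$ an infinite idempotent of $A$. Lemma~\ref{lem:transfer} then says $q$ is an infinite projection.

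\emph{Step 3: hereditary subalgebras.} Let $B$ be a nonzero hereditary $C^*$-subalgebra. By Lemma~\ref{fact:rr0}, $B$ contains a nonzero projection $q$, and $q$ is infinite by Step 2. So $B$ contains an infinite projection. This completes the verification of Definition~\ref{def:cstar}(3).

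The main obstacle is the passage from $A$ back into $\Az$, where the ring hypothesis can be used. It is handled by two facts together: the Riesz idempotent lies in $\Az$ because $\Az$ is hfc-closed, and it is similar to $q$ through an invertible $w$ of $A$. Similarity preserves membership in ideals and infiniteness, so the conclusion about $e$ transfers to $q$. The only delicacy is keeping track of which ring each relation lives in. The relation $e \sr f \lr e$ is formed in $\Az$ and then read in $A$, and conjugation by $w$ is an automorphism of $A$, not of $\Az$.
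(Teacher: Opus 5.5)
Your proof is correct and follows the paper's argument essentially step for step. It uses the Riesz idempotent $e \in \Az$ near $q$, the similarity $q = wew^{-1}$ from Lemma~\ref{lem:sim}, simplicity of $\Az$ to force $1 \in J$, and Proposition~\ref{fact:agp} together with Lemmas~\ref{lem:persist} and~\ref{lem:transfer} to make $q$ infinite; the only difference is that you rule out $A = \CC$ via Lemma~\ref{lem:auto} and the non-division-ring clause of the Ara--Goodearl--Pardo criterion, whereas the paper simply notes that $A$ contains an infinite projection (which your Step~2 also supplies, e.g.\ for $q = 1$).
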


\begin{proof}
Fix $\varepsilon = 1/8$; Lemma~\ref{lem:riesz} then yields $\|e - q\| \le \tfrac13 < 1$ for the Riesz idempotent of any $a$ with $\|a - q\| \le \varepsilon$.

\emph{Step 1 (idempotent capture).} Let $q \ne 0$ be a projection of $A$. By density pick $a_0 \in \Az$ with $\|a_0 - q\| \le 1/8$. By Lemma~\ref{lem:riesz} the Riesz idempotent $e = \chi(a_0)$ lies in $\Az$ and satisfies $\|e - q\| \le 1/3$; by Lemma~\ref{lem:sim} there is $w \in \Inv(A)$ with $q = w e w^{-1}$, and $e \ne 0$.

\emph{Step 2 (simplicity).} Let $I \ne 0$ be a closed two-sided ideal. By Lemma~\ref{fact:rr0}, $I$ contains a nonzero projection $q$; by Step~1, $e = w^{-1} q w \in I \cap \Az$ is nonzero. Then $I \cap \Az$ is a nonzero two-sided ideal of the simple ring $\Az$, so $\Az \subseteq I$, whence $I = A$.

\emph{Step 3 (infinite projections everywhere).} Let $B$ be a nonzero hereditary $C^*$-subalgebra; by Lemma~\ref{fact:rr0} it contains a nonzero projection $q$. By Step~1, $q = w e w^{-1}$ with $0 \ne e \in \Az$; by Proposition~\ref{fact:agp}, $e$ is an infinite idempotent of $\Az$; by Lemma~\ref{lem:persist} it is one in $A$ and so is $q$; by Lemma~\ref{lem:transfer}, $q$ is an infinite projection, and $q \in B$.

By Steps 2--3 and $A \ne \CC$ (it has an infinite projection), $A$ is purely infinite simple.
\end{proof}

\begin{remark}[what powers the proof]\label{rem:mechanism}
Self-adjointness of $\Az$ and any absorption property $\Az x \Az \subseteq \Az$ are unnecessary. Riesz idempotents furnish the \emph{exact} algebraic certificates that norm-controlled factorizations would otherwise have to supply; real rank zero guarantees projections suffice to detect pure infiniteness. Proposition~\ref{fact:agp} is purely ring-theoretic and Lemmas~\ref{lem:spec}--\ref{lem:persist} are elementary, so no comparison property of $A$ is presupposed, so the argument is not circular as an implication.
\end{remark}

\section{The converse, the characterization, and an obstruction}\label{sec:converse}

\subsection{The converse}\label{ss:converse}

The converse needs only inverse-closedness (Lemma~\ref{lem:implications}(1)), and no real rank hypothesis.

\begin{theorem}[converse]\label{thm:converse}
Let $A$ be a unital purely infinite simple $C^*$-algebra and $\Az \subseteq A$ a dense inverse-closed unital subring. Then $\Az$ is purely infinite simple as a ring.
\end{theorem}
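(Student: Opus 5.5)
The plan is to verify the Ara--Goodearl--Pardo criterion \cite[Thm.~1.6]{AGP} recalled after Definition~\ref{def:ring}. We must show two things: $\Az$ is not a division ring, and for every nonzero $a \in \Az$ there are $x,y \in \Az$ with $xay = 1$. Simplicity of $\Az$ then comes for free from that theorem. The only analytic inputs are density, inverse-closedness, and the $C^*$-criterion for pure infiniteness quoted after Definition~\ref{def:cstar}: for nonzero $a \in A$ there exist $\alpha,\beta \in A$ with $\alpha a \beta = 1$ \cite[Prop.~6.11.5]{BlaK}.

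\emph{Step 1 (factorization).} Let $0 \ne a \in \Az$. Since $\Az \subseteq A$, pick $\alpha,\beta \in A$ with $\alpha a \beta = 1$. By density choose $x', y' \in \Az$ so close to $\alpha,\beta$ that $\|x' a y' - 1\| < 1$; this is possible because $(u,v) \mapsto u a v$ is norm-continuous. Then $c := x' a y' \in \Az$ is invertible in $A$ by the Neumann series. By inverse-closedness, $c^{-1} \in \Az$. Setting $x := c^{-1} x'$ and $y := y'$ gives $x, y \in \Az$ and $x a y = c^{-1} c = 1$. This step is the heart of the matter, and it is exactly where the unboundedness issue from the introduction disappears: we perturb the witnesses rather than $a$, and inverse-closedness repairs the error exactly.

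\emph{Step 2 (not a division ring).} Suppose $\Az$ were a division ring, and let $a \in \Az$. For each $\lambda \in \CC$, the element $\lambda 1 - a \in \Az$ is either $0$ or invertible in $\Az$. If $a \notin \CC 1$, this forces $\spec_{\Az}(a) = \emptyset$. By Lemma~\ref{lem:implications}(2), $\spec_{\Az}(a) = \spec_A(a)$, and $\spec_A(a) \neq \emptyset$ by Gelfand's theorem in the Banach algebra $A$. This is a contradiction, so $\Az \subseteq \CC 1$. Density then gives $A = \CC$, contradicting pure infiniteness of $A$. (Here $\lambda 1 \in \Az$ is used only for $\lambda$ with $\lambda 1 - a \in \Az$; to avoid assuming $\Az$ is a $\CC$-algebra, I would state the argument for $\Az$ an algebra. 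Alternatively, I would note that a unital subring closed under inverses of its invertible-in-$A$ elements contains $\mathbb{Q}1$, and then run the spectrum argument over the closure. I expect this to be the only point needing care about scalars.)

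Combining Steps 1 and 2 with \cite[Thm.~1.6]{AGP} yields that $\Az$ is purely infinite simple. I expect no real obstacle. The one subtlety is that the statement is for a subring, so Step 2 should be phrased to handle the possibility that $\Az$ is not a priori a $\CC$-subalgebra.
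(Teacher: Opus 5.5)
Your Step~1 is correct and is exactly the paper's factorization argument. The gap is in Step~2. The theorem is stated for a unital \emph{subring}, and your spectral (Gelfand--Mazur) argument needs $\lambda 1 - a \in \Az$ for $\lambda \in \spec_A(a)$, that is, complex scalars inside $\Az$. Inverse-closedness does not supply them; Lemma~\ref{lem:auto} needs hfc-closure. For example, take the $\mathbb{Q}(i)$-span of the monomials $s_\mu s_\nu^*$ in $\mathcal{O}_n$ and repeatedly adjoin inverses of elements invertible in $\mathcal{O}_n$, countably many times. This gives a countable, dense, inverse-closed unital subring, so it cannot contain $\CC 1$. Restricting to algebras therefore proves a weaker theorem.

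Your fallback also fails. Knowing $\mathbb{Q}1 \subseteq \Az$ does not put $\lambda 1 - a$ in $\Az$, since $\spec_A(a)$ may miss every scalar $\Az$ contains. Passing to a closure loses the division-ring hypothesis. In fact no argument using only $A \ne \CC$ can work for subrings. Let $F = \mathbb{Q}(\pi i) \cong \mathbb{Q}(t)$, which is formally real and dense in $\CC$. Then the $F$-span of the quaternion matrices $1, I, J, K$ is a dense unital subring of $M_2(\CC)$. It is a division ring, hence trivially inverse-closed, because $\det(a+bI+cJ+dK) = a^2+b^2+c^2+d^2 \ne 0$ for $(a,b,c,d) \in F^4 \setminus \{0\}$.

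The missing input is infiniteness of $A$, used without scalars; this is the paper's argument. Since $1$ is an infinite projection, there is a nonunitary isometry $s$. If $\|t - s\| < 1$, then $\|s^*t - 1\| < 1$, so $s^*t \in \Inv(A)$. If $t$ were also invertible, $s^* = (s^*t)t^{-1}$ would be invertible, forcing $s$ to be unitary. Hence the open unit ball about $s$ misses $\Inv(A)$. By density there is $a_0 \in \Az$ with $\|a_0 - s\| < \tfrac12$. This $a_0$ is nonzero and not invertible in $A$, hence not invertible in $\Az$. Equivalently, a dense division subring would make $\Inv(A)$ dense in $A$, which a nonunitary isometry forbids. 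With this replacing your Step~2, the proof is complete and coincides with the paper's.
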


\begin{proof}
We verify the criterion \cite[Thm.~1.6]{AGP}: a unital ring is purely infinite simple iff it is not a division ring and every nonzero element $a$ admits $x,y$ with $xay = 1$.

\emph{Factorization.} Let $0 \ne a \in \Az$. Since $A$ is purely infinite simple unital, there are $x, y \in A$ with $xay = 1$ \cite[Prop.~6.11.5]{BlaK}. Pick $x_0, y_0 \in \Az$ with $\|x_0 a y_0 - xay\| < 1$; then $u := x_0 a y_0 \in \Az$ has $\|u - 1\| < 1$, so $u \in \Inv(A)$ and, by inverse-closedness, $u^{-1} \in \Az$. Hence $(u^{-1} x_0)\,a\,y_0 = 1$ with both factors in $\Az$.

\emph{Not a division ring.} As $1$ is an infinite projection there is a nonunitary isometry $s$ ($s^*s = 1$, $ss^* \ne 1$). If $\|t - s\| < 1$ then $\|s^*t - 1\| < 1$, so $s^*t \in \Inv(A)$; were $t$ invertible, $s^* = (s^*t)t^{-1}$ would be too, forcing $s$ unitary and $ss^* = 1$, a contradiction. So the unit ball about $s$ misses $\Inv(A)$; a point $a_0 \in \Az$ with $\|a_0 - s\| < \tfrac12$ is nonzero and noninvertible in $A$, hence in $\Az$.

By \cite[Thm.~1.6]{AGP}, $\Az$ is purely infinite simple.
\end{proof}

\subsection{The characterization}\label{ss:char}

\begin{theorem}[characterization]\label{thm:char}
For a unital $C^*$-algebra $A$ of real rank zero and a dense hfc-closed unital subring $\Az \subseteq A$, the following are equivalent:
\begin{enumerate}
\item $A$ is purely infinite simple as a $C^*$-algebra;
\item $\Az$ is purely infinite simple as a ring.
\end{enumerate}
Moreover, if $A$ is purely infinite simple then it automatically has real rank zero \cite{Zhang}, and \emph{every} dense inverse-closed unital subring of $A$ is purely infinite simple as a ring.
\end{theorem}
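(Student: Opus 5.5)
The characterization is assembled from the two directions already proved, so the plan is a bookkeeping argument rather than a new construction.

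For (2)$\Rightarrow$(1), I would invoke Theorem~\ref{thm:transfer} directly. Its hypotheses are exactly those in force here: $A$ is unital of real rank zero, and $\Az$ is a dense hfc-closed unital subring that is purely infinite simple as a ring. The conclusion is that $A$ is a purely infinite simple $C^*$-algebra.

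For (1)$\Rightarrow$(2), I would first pass from hfc-closure to inverse-closure via Lemma~\ref{lem:implications}(1). Theorem~\ref{thm:converse} then applies, because $A$ is unital purely infinite simple and $\Az$ is a dense inverse-closed unital subring. Its conclusion is that $\Az$ is purely infinite simple as a ring. The real rank zero hypothesis is not used in this direction.

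For the ``moreover'' clause, there are two parts.
\begin{enumerate}
\item The real rank zero statement is Zhang's theorem \cite{Zhang}, cited as a black box. This shows that the real rank zero hypothesis in the equivalence is harmless in the direction (1)$\Rightarrow$(2).
\item The statement about \emph{every} dense inverse-closed unital subring is exactly Theorem~\ref{thm:converse} again, now applied without assuming hfc-closure.
\end{enumerate}

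The only point that needs care is that the converse direction requires only inverse-closure, which is strictly weaker than hfc-closure. This is why the last clause holds for the larger class of subrings. I therefore do not expect a genuine obstacle: everything reduces to earlier results plus the citation of \cite{Zhang}.
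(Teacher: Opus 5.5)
Your proposal is correct and follows the paper's proof: (2)$\Rightarrow$(1) is Theorem~\ref{thm:transfer}, and (1)$\Rightarrow$(2) is Theorem~\ref{thm:converse} after passing from hfc-closure to inverse-closure via Lemma~\ref{lem:implications}(1). The final clause is Zhang's theorem combined with Theorem~\ref{thm:converse}.
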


\begin{proof}
(2)$\Rightarrow$(1) is Theorem~\ref{thm:transfer}; (1)$\Rightarrow$(2) is Theorem~\ref{thm:converse} (hfc-closed $\Rightarrow$ inverse-closed). The final clause is Zhang's theorem together with Theorem~\ref{thm:converse}.
\end{proof}

\subsection{An obstruction}\label{ss:obstruction}

\begin{theorem}[obstruction]\label{thm:obstruction}
Let $\Az \subseteq A$ be a dense hfc-closed unital subring that is purely infinite simple as a ring. Then every nonzero projection of $A$ is infinite.
\end{theorem}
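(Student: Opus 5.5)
The statement is essentially Step~1 and Step~3 of the proof of Theorem~\ref{thm:transfer}. Those steps never used real rank zero: that hypothesis only served to produce a projection inside a prescribed hereditary subalgebra or ideal. Here the projection is given to us, so the plan is to rerun Steps~1 and~3 verbatim for an arbitrary nonzero projection $q \in A$.

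\emph{Step 1: capture an idempotent in $\Az$.} Fix a nonzero projection $q \in A$. By density, choose $a_0 \in \Az$ with $\|a_0 - q\| \le 1/8$. Let $e = \chi(a_0)$ be the Riesz idempotent of Lemma~\ref{lem:riesz}. Since $\Az$ is hfc-closed (and is a unital $\CC$-subalgebra by Lemma~\ref{lem:auto}, with $1_{\Az} = 1_A$ by Lemma~\ref{lem:unit}), we get $e \in \Az$, and the lemma gives $\|e - q\| \le 1/3$. Lemma~\ref{lem:sim} then supplies $w \in \Inv(A)$ with $q = w e w^{-1}$. Because $q \ne 0$ and conjugation is injective, $e \ne 0$.

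\emph{Step 2: infiniteness of $e$.} By Proposition~\ref{fact:agp}, the nonzero idempotent $e$ of the purely infinite simple unital ring $\Az$ is infinite in $\Az$. Lemma~\ref{lem:persist}(1) makes it infinite in $A$, and Lemma~\ref{lem:persist}(2) makes $q = w e w^{-1}$ an infinite idempotent of $A$.

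\emph{Step 3: conclusion.} Lemma~\ref{lem:transfer} upgrades $q$ from an infinite idempotent to an infinite projection in the Murray--von Neumann sense. Since $q$ was an arbitrary nonzero projection, the statement follows.

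There is no genuine obstacle. The only points to check are that no step silently used real rank zero or simplicity of $A$, and that the Riesz contour argument applies for every nonzero projection $q$, including $q = 1$. For $q=1$ the spectrum of $a_0$ clusters only near $1$, $\chi \equiv 1$ there, and $e = 1$; this is consistent, since then $1$ itself is infinite in $\Az$. As a corollary, a unital $C^*$-algebra containing a nonzero finite projection admits no dense hfc-closed purely infinite simple unital subring, which is the obstruction announced in the abstract.
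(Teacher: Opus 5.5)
Your proposal is correct and is the paper's own argument: the paper's proof reruns Steps~1 and~3 of Theorem~\ref{thm:transfer} for the given projection $q$, noting that real rank zero was used there only to produce projections. You spell out the same chain, namely the Riesz idempotent $e \in \Az$, Lemma~\ref{lem:sim}, Proposition~\ref{fact:agp}, Lemma~\ref{lem:persist} and Lemma~\ref{lem:transfer}, and your check of the case $q = 1$ is consistent with that argument.
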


\begin{proof}
For a projection $q \ne 0$, run Steps 1 and 3 of Theorem~\ref{thm:transfer}: real rank zero was used there only to \emph{produce} projections, whereas $q$ is now given. Thus $q$ is infinite.
\end{proof}

\begin{corollary}\label{cor:nofinite}
If a unital $C^*$-algebra $A$ contains a nonzero finite projection, then $A$ has no dense hfc-closed purely infinite simple unital subring.
\end{corollary}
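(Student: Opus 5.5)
The plan is to argue by contraposition, using Theorem~\ref{thm:obstruction} directly. Suppose $A$ is a unital $C^*$-algebra containing a nonzero finite projection $p$, and suppose, for a contradiction, that $\Az \subseteq A$ is a dense hfc-closed unital subring which is purely infinite simple as a ring. Then the hypotheses of Theorem~\ref{thm:obstruction} hold, so every nonzero projection of $A$ is infinite. In particular $p$ is infinite, which contradicts the choice of $p$.

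The only point needing care is that Theorem~\ref{thm:obstruction} makes no real rank zero assumption. I would recheck this against the proof of Theorem~\ref{thm:transfer}. Step~1 uses only density, Lemma~\ref{lem:riesz} (hfc-closure puts the Riesz idempotent $e=\chi(a_0)$ inside $\Az$), and Lemma~\ref{lem:sim}, which supplies an invertible $w$ with $p = wew^{-1}$ and shows $e\neq 0$. Step~3 uses Proposition~\ref{fact:agp} to make $e$ infinite in $\Az$, then Lemma~\ref{lem:persist} to move this to $A$ and along the similarity $w$, and finally Lemma~\ref{lem:transfer} to turn an infinite idempotent that is a projection into an infinite projection. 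Real rank zero enters only through Lemma~\ref{fact:rr0}, where it produces projections; here $p$ is given, so no such hypothesis is needed.

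There is no real obstacle. The one step I would state explicitly is that "finite" means "not infinite" in the sense of Definition~\ref{def:cstar}(2). With that convention, the conclusion that $p$ is infinite is exactly the negation of the hypothesis that $p$ is finite.
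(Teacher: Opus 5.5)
Your proposal is correct and is essentially the paper's argument: the paper proves this corollary simply as the contrapositive of Theorem~\ref{thm:obstruction}. Your check that real rank zero enters only through Lemma~\ref{fact:rr0}, to \emph{produce} projections, matches the justification the paper gives in its proof of Theorem~\ref{thm:obstruction}, so nothing further is needed.
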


\begin{proof}
Contrapositive of Theorem~\ref{thm:obstruction}.
\end{proof}

\begin{remark}[traces]\label{rem:trace}
If the unit is an algebraically properly infinite idempotent in some dense unital subring, then $A$ is traceless (independently of hfc-closure). Indeed $1 \oplus 1 \sr q' \ler 1 \oplus 0$ in $M_2(A)$, and a tracial state $\tau$ gives $2 = (\tau \otimes \mathrm{Tr})(1 \oplus 1) = (\tau \otimes \mathrm{Tr})(q') \le 1$ via similarity-invariance, a contradiction. In particular, if a counterexample to \cite[Problem~8.4]{AGPSM} exists, its $C^*$-algebra must be traceless.
\end{remark}

\begin{remark}[R\o{}rdam's example]\label{rem:rordam}
Corollary~\ref{cor:nofinite} applies in particular to R\o{}rdam's simple nuclear unital $C^*$-algebra containing both a finite and an infinite projection \cite{Ror03}: it has no dense hfc-closed purely infinite simple unital subring.
\end{remark}

\section{Examples: smooth subalgebras}\label{sec:examples}

Theorem~\ref{thm:transfer} assumes the existence of a dense hfc-closed subring that is purely infinite simple as a ring. If the only such subring were $\Az = A$ itself, the theorem would say nothing new: it would reduce to the known implication that a $C^*$-algebra purely infinite as a ring is purely infinite as a $C^*$-algebra \cite[Prop.~3.17]{AGPSM}. So we must exhibit \emph{proper} dense subrings satisfying the hypotheses.

The two obvious candidates fail or are unavailable. Norm-closed subalgebras are useless: a dense norm-closed subalgebra equals $A$ (Remark~\ref{rem:notclosed}). Combinatorial subalgebras such as Leavitt path algebras are dense and purely infinite simple as rings, but they are not inverse-closed, hence not hfc-closed (Example~\ref{ex:laurent}). What remains, and what the noncommutative-geometry literature supplies, are the \emph{smooth} subalgebras: subalgebras of ``differentiable'' elements for a group action. We now describe these concretely in the motivating case.

\subsection{The gauge action on a Cuntz algebra}

Recall that the Cuntz algebra $\mathcal{O}_n$ ($2 \le n < \infty$) \cite{Cuntz77} is the universal $C^*$-algebra generated by isometries $s_1,\dots,s_n$ subject to
\[
s_i^* s_j = \delta_{ij}\,1, \qquad \sum_{i=1}^n s_i s_i^* = 1 .
\]
For a word $\mu = (\mu_1,\dots,\mu_k)$ in $\{1,\dots,n\}$ write $s_\mu = s_{\mu_1}\cdots s_{\mu_k}$ and $|\mu| = k$, with $s_\emptyset = 1$. Products collapse by the rule
\[
s_\mu^* s_\nu = \begin{cases} s_\gamma & \text{if } \nu = \mu\gamma,\\ s_\gamma^* & \text{if } \mu = \nu\gamma,\\ 0 & \text{if } \mu,\nu \text{ are incomparable,}\end{cases}
\]
so the monomials $s_\mu s_\nu^*$ are closed under multiplication and adjoints. Their finite complex linear span
\[
L_n := \operatorname{span}_{\CC}\{\, s_\mu s_\nu^* \ :\ \mu,\nu \text{ words of arbitrary finite length}\,\}
\]
is therefore a unital $*$-subalgebra of $\mathcal{O}_n$, dense by construction; it is the \emph{Leavitt algebra}, the purely algebraic object presented by the same relations over $\CC$. It is a standard example of a ring that is purely infinite simple in the algebraic sense, and $\mathcal{O}_n = \overline{L_n}$. Leavitt path algebras and, more generally, Steinberg algebras are the analogous combinatorial dense subalgebras attached to graphs and to ample groupoids; $L_n$ is the case of the graph with one vertex and $n$ loops.

Fix $z$ in the unit circle $\mathbb{T}$. The assignment $s_i \mapsto z s_i$ preserves both defining relations, since $(zs_i)^*(zs_j) = |z|^2 s_i^*s_j = \delta_{ij}$ and $\sum_i (zs_i)(zs_i)^* = |z|^2 \cdot 1 = 1$. By universality it extends to an automorphism $\alpha_z$ of $\mathcal{O}_n$, and $z \mapsto \alpha_z$ is a strongly continuous action of $\mathbb{T}$, the \emph{gauge action}. (Cuntz \cite{Cuntz77} works instead with the fixed-point subalgebra of this action, the UHF core of type $n^\infty$, which he uses as the algebra of ``Fourier coefficients''; the circle action itself is presented in this form in the Cuntz--Krieger and graph-algebra literature, e.g.\ \cite{Raeburn}.) On monomials it acts by a character:
\begin{equation}\label{eq:gauge}
\alpha_z(s_\mu s_\nu^*) = z^{|\mu|}\,\overline{z}^{\,|\nu|}\, s_\mu s_\nu^* = z^{\,|\mu|-|\nu|}\, s_\mu s_\nu^* .
\end{equation}
Only the action is needed below; we use \eqref{eq:gauge} solely to compute with concrete elements.

\begin{lemma}[strong continuity]\label{lem:strongcont}
For each $a \in \mathcal{O}_n$ the orbit map $z \mapsto \alpha_z(a)$ is norm-continuous on $\mathbb{T}$.
\end{lemma}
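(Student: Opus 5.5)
The plan is a standard $\varepsilon/3$ argument: check norm-continuity on the dense subalgebra $L_n$, then pass to all of $\mathcal{O}_n$ using the fact that each $\alpha_z$ is an isometry.

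\emph{Step 1 (monomials and their span).} By \eqref{eq:gauge}, for a monomial $s_\mu s_\nu^*$ we have
\[
\|\alpha_z(s_\mu s_\nu^*) - \alpha_{z_0}(s_\mu s_\nu^*)\| = |z^{k} - z_0^{k}|\,\|s_\mu s_\nu^*\| \le |z^{k}-z_0^{k}|,
\]
where $k = |\mu| - |\nu|$, because $s_\mu s_\nu^*$ is a product of partial isometries and so has norm at most $1$. The right-hand side tends to $0$ as $z \to z_0$. Each $\alpha_z$ is linear, so for a finite sum $b = \sum c_{\mu\nu} s_\mu s_\nu^* \in L_n$ the orbit map $z \mapsto \alpha_z(b)$ is a finite linear combination of continuous $\mathcal{O}_n$-valued maps, and is therefore continuous.

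\emph{Step 2 (uniform isometry).} Each $\alpha_z$ is a $*$-automorphism of a $C^*$-algebra. An injective $*$-homomorphism between $C^*$-algebras is isometric, so $\|\alpha_z(x)\| = \|x\|$ for every $x$ and every $z$. This is the only analytic input needed, and the bound holds uniformly in $z$.

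\emph{Step 3 (approximation).} Fix $a \in \mathcal{O}_n$, a point $z_0 \in \mathbb{T}$, and $\varepsilon > 0$. By density of $L_n$, choose $b \in L_n$ with $\|a - b\| < \varepsilon/3$. Then
\[
\|\alpha_z(a) - \alpha_{z_0}(a)\| \le \|\alpha_z(a-b)\| + \|\alpha_z(b) - \alpha_{z_0}(b)\| + \|\alpha_{z_0}(b-a)\|.
\]
By Step 2 the first and third terms are each less than $\varepsilon/3$. By Step 1 the middle term is less than $\varepsilon/3$ once $z$ is close enough to $z_0$. Hence $z \mapsto \alpha_z(a)$ is continuous at $z_0$.

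There is no real obstacle in this argument. The only point that needs care is that the estimate from Step 2 is uniform in $z$, which is what lets the approximation of $a$ by $b$ be chosen once, independently of $z$.
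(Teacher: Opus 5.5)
Your proposal is correct and follows the paper's own proof essentially step for step: continuity on monomials from the gauge formula, extension to $L_n$ by linearity, isometry of each $*$-automorphism $\alpha_z$, and the same $\varepsilon/3$ estimate. The only cosmetic difference is that you bound $\|s_\mu s_\nu^*\| \le 1$, whereas the paper writes the monomial estimate as an equality; this changes nothing.
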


\begin{proof}
On a monomial, \eqref{eq:gauge} gives $\|\alpha_z(s_\mu s_\nu^*) - \alpha_w(s_\mu s_\nu^*)\| = |z^{d} - w^{d}|$ with $d = |\mu|-|\nu|$, which tends to $0$ as $z \to w$; hence the orbit map is continuous for every $a \in L_n$, being a finite sum of such terms. For general $a$, each $\alpha_z$ is a $*$-automorphism of a $C^*$-algebra and therefore isometric, so the family $\{\alpha_z\}_{z\in\mathbb{T}}$ is uniformly bounded by $1$. Given $\varepsilon > 0$ choose $b \in L_n$ with $\|a - b\| < \varepsilon/3$; then
\[
\|\alpha_z(a) - \alpha_w(a)\| \le \|\alpha_z(a-b)\| + \|\alpha_z(b) - \alpha_w(b)\| + \|\alpha_w(b-a)\| < \tfrac{2\varepsilon}{3} + \|\alpha_z(b)-\alpha_w(b)\|,
\]
and the middle term is below $\varepsilon/3$ for $z$ near $w$.
\end{proof}

\begin{remark}
Strong continuity is continuity of $z \mapsto \alpha_z(a)$ for each fixed $a$; the stronger requirement that $z \mapsto \alpha_z$ be continuous in the norm of $\operatorname{Aut}(\mathcal{O}_n)$ fails, and is not what is meant.
\end{remark}

\subsection{Smooth subalgebras and their derivations}

Throughout this subsection $G$ is a compact Lie group acting strongly continuously on a unital $C^*$-algebra $A$ by $\alpha$. Write $\Phi_a : G \to A$, $\Phi_a(g) = \alpha_g(a)$, for the \emph{orbit map} of $a$. We use repeatedly that each $\alpha_g$, being a $*$-automorphism of a $C^*$-algebra, is isometric:
\begin{equation}\label{eq:isom}
\|\alpha_g(a)\| = \|a\| \qquad (g \in G,\ a \in A),
\end{equation}
which holds because a $*$-isomorphism preserves spectra, hence spectral radii, and $\|x\|^2 = \|x^*x\| = r(x^*x)$.

\begin{definition}[smooth elements]\label{def:smooth}
$A^\infty := \{a \in A : \Phi_a \in C^\infty(G,A)\}$, the \emph{smooth subalgebra} of the action.
\end{definition}

We first recall the Lie-theoretic notation used below; see \cite{Knapp} for background. A compact Lie group $G$ is in particular a smooth manifold, and its \emph{Lie algebra} $\g = \operatorname{Lie}(G) := T_eG$ is the tangent space at the identity --- a real vector space with $\dim_{\mathbb{R}}\g = \dim G =: m$. Each $X \in \g$ determines a unique one-parameter subgroup $t \mapsto \exp(tX)$, the integral curve through $e$ with velocity $X$ at $t = 0$; the resulting \emph{exponential map} $\exp : \g \to G$ restricts to a diffeomorphism from a neighbourhood of $0 \in \g$ onto a neighbourhood of $e \in G$, and is surjective when $G$ is compact and connected. Although $X$ lies in $\g$ and not in $G$, it enters the constructions below only through $\exp$: the vector $X$ selects a direction, $\exp$ converts it into the genuine group elements $\exp(tX) \in G$, and the action $\alpha$ is applied to those, never to $X$ itself. Composing the action with this one-parameter subgroup yields, for each $X \in \g$, the strongly continuous one-parameter group of isometries $t \mapsto \alpha_{\exp(tX)}$ on $A$, whose generator is the derivation $\delta_X$ defined next; that it is a group of \emph{isometries} is \eqref{eq:isom}, and this is what will make $\delta_X$ a closed operator in Theorem~\ref{thm:frechetstructure}. The passage to $\g$ is what secures countability: the orbit map records smoothness over the whole manifold $G$, which carries no natural countable family of seminorms, whereas a basis $X_1,\dots,X_m$ of the finite-dimensional space $\g$ produces the countably many iterated derivations $\delta_{j_1}\cdots\delta_{j_k}$, and hence the countable seminorm family under which $A^\infty$ is a Fr\'echet algebra.

Fix a basis $X_1,\dots,X_m$ of $\g = \operatorname{Lie}(G)$, so $m = \dim G$. For $X \in \g$ put
\[
\operatorname{dom}(\delta_X) := \Big\{a \in A \ :\ \lim_{t\to0}\tfrac{1}{t}\big(\alpha_{\exp(tX)}(a) - a\big) \ \text{exists in the norm of } A\Big\},
\]
and for $a \in \operatorname{dom}(\delta_X)$ let $\delta_X(a)$ denote that limit; write $\delta_j := \delta_{X_j}$. The map $X \mapsto \delta_X$ is linear.

\begin{lemma}[differentiability along the whole orbit]\label{lem:C1}
Let $X \in \g$, write $\alpha_t := \alpha_{\exp(tX)}$, and let $a \in \operatorname{dom}(\delta_X)$. Then $t \mapsto \alpha_t(a)$ is $C^1$ with $\frac{d}{dt}\alpha_t(a) = \alpha_t(\delta_X(a))$ for every $t$, and consequently
\[
\alpha_s(a) - a = \int_0^s \alpha_t\big(\delta_X(a)\big)\,dt \qquad (s \in \mathbb{R}).
\]
\end{lemma}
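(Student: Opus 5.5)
The plan is to use the group law of the one-parameter subgroup together with the isometry property \eqref{eq:isom} to compute the derivative at an arbitrary time $t$. Then I will upgrade pointwise differentiability to $C^1$ using strong continuity, and finally obtain the integral formula from the fundamental theorem of calculus for Banach-space-valued functions.

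\emph{Step 1: derivative at every $t$.} Since $\exp((t+h)X)=\exp(tX)\exp(hX)$ and $\alpha$ is an action, we have $\alpha_{t+h}=\alpha_t\circ\alpha_h$. Hence
\[
\tfrac1h\big(\alpha_{t+h}(a)-\alpha_t(a)\big)-\alpha_t(\delta_X(a))=\alpha_t\Big(\tfrac1h\big(\alpha_h(a)-a\big)-\delta_X(a)\Big).
\]
By \eqref{eq:isom} the norm of the right-hand side equals $\|\tfrac1h(\alpha_h(a)-a)-\delta_X(a)\|$, which tends to $0$ as $h\to0$ by definition of $\operatorname{dom}(\delta_X)$. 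So the two-sided derivative exists at every $t\in\mathbb{R}$ and equals $\alpha_t(\delta_X(a))$. This uses only the homomorphism property of $\exp$ on the one-parameter subgroup, and that $\alpha_t$ is linear and isometric.

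\emph{Step 2: continuity of the derivative.} The map $t\mapsto\alpha_t(\delta_X(a))$ is the composite of the continuous map $t\mapsto\exp(tX)\in G$ with the orbit map $g\mapsto\alpha_g(\delta_X(a))$. The latter is norm-continuous by the standing strong continuity hypothesis on the action (Lemma~\ref{lem:strongcont} in the gauge case). Hence $t\mapsto\alpha_t(a)$ is $C^1$.

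\emph{Step 3: integral formula.} Put $F(s):=\alpha_s(a)-a-\int_0^s\alpha_t(\delta_X(a))\,dt$, a Riemann integral of a continuous $A$-valued function. The fundamental theorem of calculus for continuous Banach-valued integrands gives $F'(s)=\alpha_s(\delta_X(a))-\alpha_s(\delta_X(a))=0$ for all $s$, and $F(0)=0$. To conclude $F\equiv0$ without a vector-valued mean value theorem, compose with an arbitrary $\varphi\in A^*$. The scalar function $\varphi\circ F$ is differentiable with zero derivative, hence constant, hence zero. Since this holds for every $\varphi$, Hahn--Banach gives $F(s)=0$.

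The main (mild) obstacle is Step 1: one must use the one-sided factorisation $\alpha_{t+h}=\alpha_t\alpha_h$ so that the difference quotient at $t$ is the image under the isometry $\alpha_t$ of the difference quotient at $0$. The other ordering, $\alpha_h\alpha_t$, would require $\alpha_t(a)\in\operatorname{dom}(\delta_X)$, which has not been established. Once this is done, the remaining steps are routine.
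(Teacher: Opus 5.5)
Your proposal is correct and follows essentially the same route as the paper. The paper also writes the difference quotient at $t$ as the image under $\alpha_t$ of the difference quotient at $0$ via $\alpha_{t+h}=\alpha_t\alpha_h$, uses \eqref{eq:isom} and strong continuity to get a continuous derivative, and cites the vector-valued fundamental theorem of calculus, which your Step~3 just spells out via functionals and Hahn--Banach.
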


\begin{proof}
The definition gives differentiability only at $t=0$; the group law upgrades it. Since $\alpha_{t+h} = \alpha_t\alpha_h$ and $\alpha_t$ is linear,
\[
\frac{\alpha_{t+h}(a)-\alpha_t(a)}{h} = \alpha_t\!\left(\frac{\alpha_h(a)-a}{h}\right).
\]
As $h \to 0$ the bracket tends in norm to $\delta_X(a)$, and $\alpha_t$ is bounded by \eqref{eq:isom}, hence continuous, so it may be interchanged with the limit. The derivative $t \mapsto \alpha_t(\delta_X(a))$ is continuous by strong continuity, so the map is $C^1$; the integral formula is the fundamental theorem of calculus for $C^1$ maps into a Banach space.
\end{proof}

\begin{lemma}\label{lem:der}
Each $\delta_X$ is a densely defined closed derivation, and
\begin{equation}\label{eq:domdesc}
A^\infty = \bigcap_{k \ge 0}\ \bigcap_{j_1,\dots,j_k}\operatorname{dom}(\delta_{j_1}\cdots\delta_{j_k}).
\end{equation}
\end{lemma}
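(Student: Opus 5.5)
The statement has three parts, and I will prove them in this order: $\delta_X$ is a derivation, it is densely defined and closed, and the domain identity \eqref{eq:domdesc} holds. Throughout, write $\alpha_t := \alpha_{\exp(tX)}$.

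\emph{Derivation.} Let $a,b \in \operatorname{dom}(\delta_X)$. Since each $\alpha_t$ is multiplicative,
\[
\tfrac1t\big(\alpha_t(ab)-ab\big) = \tfrac1t\big(\alpha_t(a)-a\big)\alpha_t(b) + a\,\tfrac1t\big(\alpha_t(b)-b\big).
\]
By strong continuity, $\alpha_t(b) \to b$. The two difference quotients converge to $\delta_X(a)$ and $\delta_X(b)$. Hence $ab \in \operatorname{dom}(\delta_X)$ and $\delta_X(ab) = \delta_X(a)b + a\delta_X(b)$. Linearity of $\delta_X$ is immediate from the definition.

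\emph{Closedness.} Take $a_n \to a$ with $a_n \in \operatorname{dom}(\delta_X)$ and $\delta_X(a_n) \to b$. Lemma~\ref{lem:C1} gives $\alpha_s(a_n) - a_n = \int_0^s \alpha_t(\delta_X(a_n))\,dt$. Each $\alpha_t$ is isometric by \eqref{eq:isom}, so the integrands converge uniformly on $[0,s]$ to $\alpha_t(b)$. Passing to the limit gives $\alpha_s(a)-a = \int_0^s \alpha_t(b)\,dt$. The integrand $t\mapsto\alpha_t(b)$ is continuous, so dividing by $s$ and letting $s\to 0$ shows $a \in \operatorname{dom}(\delta_X)$ and $\delta_X(a) = b$.

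\emph{Density.} It suffices to show that $A^\infty$ is dense and that $A^\infty \subseteq \operatorname{dom}(\delta_X)$. I will use Gårding smoothing. Fix an approximate identity $\varphi_k \in C_c^\infty(G)$ with $\varphi_k \ge 0$, $\int\varphi_k = 1$, and supports shrinking to $e$. Set $a_k := \int_G \varphi_k(g)\alpha_g(a)\,dg$. Strong continuity and the uniform bound $\|\alpha_g\|=1$ give $a_k \to a$. Left invariance of Haar measure gives
\[
\alpha_h(a_k) = \int_G \varphi_k(h^{-1}g)\,\alpha_g(a)\,dg,
\]
and this is smooth in $h$ because derivatives can be passed onto $\varphi_k$, by dominated differentiation of a Bochner integral. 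So $a_k \in A^\infty$.

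\emph{The identity \eqref{eq:domdesc}.} For ``$\subseteq$'', let $a \in A^\infty$. Then $t\mapsto\alpha_{\exp(tX)}(a) = \Phi_a(\exp tX)$ is smooth, so $a \in \operatorname{dom}(\delta_X)$ and $\delta_X(a) = d\Phi_a(e)X$. The identity $\Phi_{\delta_X a}(g) = d\Phi_a(g)(dL_g X)$, i.e.\ $\Phi_{\delta_X a}$ is the left-invariant vector field $X$ applied to $\Phi_a$, shows $\Phi_{\delta_X a}$ is smooth. So $A^\infty$ is $\delta_j$-invariant, and induction on $k$ gives the inclusion.

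For ``$\supseteq$'', suppose all iterated derivatives $\delta_{j_1}\cdots\delta_{j_k}a$ exist. By Lemma~\ref{lem:C1} and the group law, the derivative of $\Phi_a$ along the left-invariant field $\tilde X_j$ at $g$ is $\alpha_g(\delta_j a)$, and these are continuous in $g$. Iterating, every iterated left-invariant derivative $\tilde X_{j_1}\cdots\tilde X_{j_k}\Phi_a = \Phi_{\delta_{j_k}\cdots\delta_{j_1}a}$ exists and is continuous. The fields $\tilde X_1,\dots,\tilde X_m$ form a global frame of $TG$, so continuity of all iterated frame derivatives yields $\Phi_a \in C^\infty(G,A)$.

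\emph{Main obstacle.} The hard part is this last step: passing from continuity of iterated partial-type derivatives along the frame to genuine $C^\infty$ (Fréchet) differentiability. I would work in exponential coordinates of the second kind, $(t_1,\dots,t_m)\mapsto \exp(t_1X_1)\cdots\exp(t_mX_m)$, near each point. In these coordinates each coordinate partial of $\Phi_a$ is a combination, with smooth coefficients, of frame derivatives. Then I would apply the Banach-valued fact that continuous first partials imply $C^1$, and induct on the order.
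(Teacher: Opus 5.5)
Your treatment of the derivation property, closedness (via Lemma~\ref{lem:C1} and \eqref{eq:isom}), density (G\aa{}rding smoothing) and the inclusion ``$\subseteq$'' is the paper's. Your plan for ``$\supseteq$'' via coordinates of the second kind is also the paper's route. The gap is in the step you flag as the main obstacle.

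The claim that each coordinate partial of $\Phi_a$ is a smooth combination of frame derivatives is the chain rule $\partial_{t_i}=\sum_j c_{ij}\tilde X_j$. That rule only applies to a function already known to be differentiable, so as stated it is circular. Concretely, take the chart $\psi(t)=g_0\exp(t_1X_1)\cdots\exp(t_mX_m)$. Only the $t_m$-lines are orbits of a basis one-parameter subgroup. The $t_i$-line through $p=\psi(t)$ is $s\mapsto p\exp\bigl(s\operatorname{Ad}(u^{-1})X_i\bigr)$ with $u=\exp(t_{i+1}X_{i+1})\cdots\exp(t_mX_m)$. So $\partial_{t_i}(\Phi_a\circ\psi)$ exists iff $a\in\operatorname{dom}(\delta_Y)$ for $Y=\operatorname{Ad}(u^{-1})X_i$, a direction outside the basis. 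Your hypothesis only gives $a$ in the iterated domains of the basis derivations. For unbounded generators, the inclusion $\operatorname{dom}(\delta_X)\cap\operatorname{dom}(\delta_{X'})\subseteq\operatorname{dom}(\delta_{X+X'})$ is not automatic. The paper's argument is no more detailed here: it identifies the $\alpha_g(\delta_w a)$ with chart partials, which is literally true only at the centre of each chart. It also leans on the unproved remark before the lemma that $X\mapsto\delta_X$ is linear; read at the level of domains, that remark is exactly the missing ingredient.

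The missing lemma is this: if $a\in\bigcap_j\operatorname{dom}(\delta_j)$ and $Y=\sum_jc_jX_j$, then $a\in\operatorname{dom}(\delta_Y)$ and $\delta_Ya=\sum_jc_j\delta_ja$. It follows from tools you already have.
\begin{itemize}
\item Lemma~\ref{lem:C1}, telescoping and \eqref{eq:isom} give $\|\alpha_{\exp(s_1X_1)\cdots\exp(s_mX_m)}(a)-a\|\le\sum_j|s_j|\,\|\delta_ja\|$.
\item Write $\exp(tY)=v(t)k(t)$ with $v(t)=\exp(tc_1X_1)\cdots\exp(tc_mX_m)$. Then $k(0)=e$ and $k'(0)=0$, so $k(t)$ has second-kind coordinates of size $O(t^2)$, and hence $\|\alpha_{k(t)}(a)-a\|=O(t^2)$.
\item Telescoping shows $t^{-1}\bigl(\alpha_{v(t)}(a)-a\bigr)\to\sum_jc_j\delta_ja$. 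Combining with $\alpha_{\exp tY}(a)-a=\alpha_{v(t)}\bigl(\alpha_{k(t)}(a)-a\bigr)+\bigl(\alpha_{v(t)}(a)-a\bigr)$ proves the lemma.
\end{itemize}
With this in hand your coordinate argument goes through, but you can also bypass coordinates. Lemma~\ref{lem:C1} along $s\mapsto\exp(sY)$ gives
$\|\Phi_a(g\exp Y)-\Phi_a(g)-\alpha_g(\delta_Ya)\|\le\sum_j|c_j|\sup_{0\le s\le1}\|\alpha_{\exp(sY)}(\delta_ja)-\delta_ja\|=o(|Y|)$, uniformly in $g$. So $\Phi_a$ is $C^1$, with derivative $Y\mapsto\sum_jc_j\Phi_{\delta_ja}(g)$ in the left trivialisation, which is continuous in $g$. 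Each $\delta_ja$ again lies in all iterated domains, so induction on the order gives $\Phi_a\in C^\infty$.

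A minor slip: $\tilde X_{j_1}\cdots\tilde X_{j_k}\Phi_a=\Phi_{\delta_{j_1}\cdots\delta_{j_k}a}$, with no reversal of order. This is harmless, since all words occur.
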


\begin{proof}
\emph{Derivation.} For $a,b \in \operatorname{dom}(\delta_X)$, multiplicativity of $\alpha_t$ gives
\[
\frac{\alpha_t(ab)-ab}{t} = \frac{\alpha_t(a)-a}{t}\,\alpha_t(b) \;+\; a\,\frac{\alpha_t(b)-b}{t},
\]
and $\alpha_t(b) \to b$ by strong continuity, so the limit exists: $ab \in \operatorname{dom}(\delta_X)$ with $\delta_X(ab) = \delta_X(a)b + a\delta_X(b)$.

\emph{Dense domain.} If $a \in A^\infty$ then $\Phi_a$ is $C^\infty$, so in particular $t \mapsto \alpha_{\exp(tX)}(a)$ is differentiable at $0$, which is exactly the condition for $a \in \operatorname{dom}(\delta_X)$. Thus $A^\infty \subseteq \operatorname{dom}(\delta_X)$, and $A^\infty$ is dense by Theorem~\ref{thm:dense} below.

\emph{Closedness.} Let $a_n \in \operatorname{dom}(\delta_X)$ with $a_n \to a$ and $\delta_X(a_n) \to y$; we must produce the limit defining $\delta_X(a)$ and identify it as $y$. By Lemma~\ref{lem:C1},
$\alpha_s(a_n) - a_n = \int_0^s \alpha_t(\delta_X(a_n))\,dt$,
an identity for each $n$ with no limit in $s$. The integrands converge uniformly in $t$, since $\|\alpha_t(\delta_X(a_n)) - \alpha_t(y)\| = \|\delta_X(a_n)-y\|$ by \eqref{eq:isom}, a bound independent of $t$; so we may pass to the limit and obtain
$\alpha_s(a) - a = \int_0^s \alpha_t(y)\,dt$.
Dividing by $s \ne 0$ and writing $y = \frac1s\int_0^s y\,dt$,
\[
\Big\|\frac{\alpha_s(a)-a}{s} - y\Big\| = \Big\|\frac1s\int_0^s\big[\alpha_t(y)-y\big]dt\Big\| \le \sup_{|t|\le|s|}\|\alpha_t(y)-y\| \xrightarrow[s\to0]{} 0
\]
by strong continuity at the identity. So $a \in \operatorname{dom}(\delta_X)$ with $\delta_X(a) = y$.

\emph{Proof of \eqref{eq:domdesc}.} $(\subseteq)$ The composite $\delta_{j_1}\cdots\delta_{j_k}$ is defined at $a$ when each stage lands in the next domain, so it suffices to know that $A^\infty$ is invariant under every $\delta_X$. Pulling the bounded map $\alpha_g$ through the defining limit,
\[
\Phi_{\delta_X(a)}(g) = \alpha_g\big(\delta_X(a)\big) = \frac{d}{dt}\Big|_{0}\alpha_{g\exp(tX)}(a) = \frac{d}{dt}\Big|_{0}\Phi_a\big(g\exp(tX)\big),
\]
so $\Phi_{\delta_X(a)}$ is a directional derivative of $\Phi_a$; a derivative of a $C^\infty$ map is $C^\infty$, whence $\delta_X(a) \in A^\infty$, and the chain never breaks.

$(\supseteq)$ Suppose all the iterated derivatives exist at $a$. For $u = \exp(t_1X_{j_1})\cdots\exp(t_kX_{j_k})$ we have $\Phi_a(gu) = \alpha_g(\alpha_u(a))$, and differentiating at the origin gives $\alpha_g(\delta_{j_1}\cdots\delta_{j_k}(a))$. In exponential coordinates of the second kind these are precisely the partial derivatives of $\Phi_a$; they exist in all orders, and each is continuous in $g$, being of the form $g \mapsto \alpha_g(y)$ with $y$ fixed. A map all of whose partial derivatives of all orders exist and are continuous is $C^\infty$, so $a \in A^\infty$.
\end{proof}

\begin{remark}[the number of derivations]\label{rem:onegen}
Since $X \mapsto \delta_X$ is linear, the derivations generated by the action span a space of dimension $\dim\g = \dim G$. For $G = \mathbb{T}$ this is $1$: every one-parameter subgroup is $t \mapsto e^{ict}$, and differentiating along it gives $c\,\delta$ where $\delta(a) = \frac{d}{d\theta}\big|_{0}\alpha_{e^{i\theta}}(a)$; then \eqref{eq:domdesc} reads $\mathcal{O}_n^\infty = \bigcap_{k\ge0}\operatorname{dom}(\delta^k)$. This is a statement about $\mathbb{T}$, not about $\mathcal{O}_n$, which has many other derivations --- for instance the inner ones $x \mapsto bx-xb$ with $b \notin \CC 1$, nonzero since $\mathcal{O}_n$ is simple with trivial centre.
\end{remark}

On a monomial \eqref{eq:gauge} gives $\alpha_{e^{i\theta}}(s_\mu s_\nu^*) = e^{i\theta d}s_\mu s_\nu^*$ with $d = |\mu|-|\nu|$, so the orbit map is a scalar function times a fixed element, hence $C^\infty$, and
\[
\delta(s_\mu s_\nu^*) = i\,d\;s_\mu s_\nu^*.
\]
In particular $L_n \subseteq \mathcal{O}_n^\infty$. Since $\delta$ multiplies a monomial of degree $d$ by $id$, it is unbounded, so by the closed graph theorem its domain is a proper subspace; the next lemma records the corresponding statement for $\mathcal{O}_n^\infty$.

\begin{lemma}[$\mathcal{O}_n^\infty$ is proper]\label{lem:proper}
$\mathcal{O}_n^\infty \ne \mathcal{O}_n$.
\end{lemma}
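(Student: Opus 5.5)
Two routes are available; I would write up the abstract one and keep the concrete element from the introduction as a remark.

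\emph{Abstract route.} The plan is to argue by contradiction. Suppose $\mathcal{O}_n^\infty = \mathcal{O}_n$. By \eqref{eq:domdesc} with $G=\mathbb{T}$ (Remark~\ref{rem:onegen}), this says $\operatorname{dom}(\delta)=\mathcal{O}_n$. By Lemma~\ref{lem:der}, $\delta$ is a closed derivation, so a closed linear operator defined on all of the Banach space $\mathcal{O}_n$. The closed graph theorem then makes $\delta$ bounded. But on the monomial $s_1^{\,d}$ (word $\mu=(1,\dots,1)$ of length $d$, $\nu=\emptyset$) we have $\delta(s_1^{\,d}) = i d\, s_1^{\,d}$. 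Since $s_1^{\,d}$ is an isometry of norm $1$, this gives $\|\delta(s_1^{\,d})\| = d \to \infty$, contradicting boundedness. This is short and uses only results already stated.

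\emph{Concrete route.} I would add a remark exhibiting the explicit witness $a=\sum_{d\ge1} d^{-2}s_1^{\,d}$, which converges absolutely in norm. The plan is to show $a\notin\operatorname{dom}(\delta^3)$.

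First, extract Fourier components. Define $E_k(x)=\int_{\mathbb{T}} \overline{z}^{\,k}\alpha_z(x)\,dz$; this is a norm-one map and satisfies $E_d(a)=d^{-2}s_1^{\,d}$.

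Next, show that $\delta$ commutes with $E_k$ on its domain. This holds because $\delta$ commutes with every $\alpha_z$ (the group is abelian) and is closed, so it passes through the Bochner integral. On the degree-$k$ spectral subspace, which is $\alpha$-invariant, $\delta$ acts as multiplication by $ik$.

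Then, if $a\in\operatorname{dom}(\delta^3)$, we get $\|E_d(\delta^3 a)\| = d^3\cdot d^{-2} = d \le \|\delta^3 a\|$ for every $d$. This is impossible, so $a\notin \mathcal{O}_n^\infty$.

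\emph{Main obstacle.} The only delicate step is the commutation $\delta E_k = E_k\delta$ and the identification of $\delta$ on spectral subspaces. For this I would use Lemma~\ref{lem:C1} and closedness, interchanging the difference quotient with the integral via dominated convergence in norm. Since the abstract route avoids this step entirely, it is the primary proof.
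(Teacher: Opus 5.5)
Your proposal is correct, and your primary argument takes a different route from the paper's proof. The paper proves the lemma only through the explicit element $a=\sum_{d\ge1}d^{-2}s_1^{\,d}$. It asserts that if $a$ were smooth, the third derivative of $\Phi_a$ at $\theta=0$ would equal the termwise-differentiated series $-i\sum_{d\ge1}d\,s_1^{\,d}$, whose partial sums are not Cauchy.

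Your closed-graph argument appears in the paper only as a one-line aside just before the lemma. Your version of it is complete. You have $\mathcal{O}_n^\infty\subseteq\operatorname{dom}(\delta)$, and $\delta$ is closed by Lemma~\ref{lem:der}, whose closedness proof does not depend on this lemma. Then $\|\delta(s_1^{\,d})\|=d$ with $\|s_1^{\,d}\|=1$ contradicts boundedness. This route buys brevity and the stronger conclusion $\operatorname{dom}(\delta)\ne\mathcal{O}_n$, but gives no explicit witness.

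Your concrete route is the paper's argument made rigorous. The paper's step ``the third derivative would equal $-i\sum d\,s_1^{\,d}$'' is termwise differentiation of a norm-convergent series, and it is not justified as written. Justifying it would require uniform convergence of the differentiated series, which is exactly what fails here. Your Fourier maps $E_k$ supply the missing identification: $E_d(\delta^3a)=(id)^3E_d(a)=-i\,d\,s_1^{\,d}$ has norm $d$, yet it is bounded by $\|\delta^3a\|$. This also shows quantitatively that $a\notin\operatorname{dom}(\delta^3)$.

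One simplification in that route: you need neither closedness of $\delta$ nor dominated convergence to get $E_k\delta\subseteq\delta E_k$. Since $E_k$ is bounded and commutes with every $\alpha_z$ ($\mathbb{T}$ being abelian), it passes directly through the difference quotients.
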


\begin{proof}
Put $a = \sum_{d \ge 1} d^{-2}\, s_1^{\,d}$. Each $s_1^{\,d}$ is an isometry, so $\sum_d d^{-2}\|s_1^{\,d}\| < \infty$ and the series converges in norm; thus $a \in \mathcal{O}_n$. By \eqref{eq:gauge}, $\Phi_a(e^{i\theta}) = \sum_{d\ge1}d^{-2}e^{i\theta d}s_1^{\,d}$. If $a$ were smooth, the third derivative of $\Phi_a$ at $\theta = 0$ would exist in norm and equal $-i\sum_{d\ge1}d\,s_1^{\,d}$; but the terms $d\,s_1^{\,d}$ have norm $d \to \infty$, so the partial sums are not Cauchy. Hence $a \notin \mathcal{O}_n^\infty$.
\end{proof}

\subsection{Haar measure and vector-valued integration}\label{ss:haar}

Every compact Hausdorff group $G$ carries a unique regular Borel probability measure $dg$ invariant under translation, $\int_G f(hg)\,dg = \int_G f(g)\,dg$: the normalized \emph{Haar measure} \cite[Ch.~2]{Folland}. Finiteness follows from compactness, and it is what permits a bump function of integral $1$ supported in an arbitrarily small neighbourhood of the identity. For $G = \mathbb{T}$ one may take $\int_\mathbb{T} f(z)\,dz = \frac{1}{2\pi}\int_0^{2\pi}f(e^{i\theta})\,d\theta$.

The measure is scalar-valued; what is $A$-valued is the integrand. Integrals of $A$-valued functions are Bochner integrals: for a simple function $\sum_i \chi_{E_i}x_i$ with $x_i \in A$ one sets $\int_G \sum_i\chi_{E_i}x_i\,dg := \sum_i \mu(E_i)x_i \in A$, and for general $f$ one takes limits of simple approximants \cite[Ch.~II]{DiestelUhl}. No $A$-valued measure is involved; averaging of this kind over a group acting on a $C^*$-algebra is treated systematically in \cite[App.~B]{Williams}.

\begin{lemma}[properties of the integral]\label{lem:bochner}
Let $f : G \to A$ be continuous. Then $f$ is Bochner integrable and
\begin{enumerate}
\item $\int_G f(g)\,dg \in A$;
\item $\big\|\int_G f(g)\,dg\big\| \le \int_G \|f(g)\|\,dg$;
\item $T\big(\int_G f\,dg\big) = \int_G T(f(g))\,dg$ for every bounded linear $T : A \to A$;
\item $\int_G \varphi(g)\,x\,dg = \big(\int_G \varphi(g)\,dg\big)x$ for fixed $x \in A$ and scalar $\varphi \in C(G)$;
\item $\int_G f(hg)\,dg = \int_G f(g)\,dg$ for each fixed $h \in G$.
\end{enumerate}
\end{lemma}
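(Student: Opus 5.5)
The plan is to build the Bochner integral for continuous $f : G \to A$ by uniform approximation with simple functions, and then obtain each of (1)--(5) first for simple functions and pass to the limit. Since $G$ is a compact metrizable space (a compact Lie group) and $f$ is continuous, $f$ is uniformly continuous. For each $n$ I would take a finite Borel partition $G = E_1^{(n)} \sqcup \dots \sqcup E_{k_n}^{(n)}$ into sets of diameter $< 1/n$, pick points $g_i \in E_i^{(n)}$, and put $f_n := \sum_i \chi_{E_i^{(n)}} f(g_i)$. Uniform continuity gives $\sup_g \|f_n(g) - f(g)\| \to 0$. Consequently $\int_G \|f_n - f\|\,dg \to 0$, because $dg$ is a probability measure. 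This shows $f$ is strongly measurable with $\int_G \|f\|\,dg \le \sup\|f\| < \infty$, so $f$ is Bochner integrable. Moreover $\int f_n\,dg$ is Cauchy in $A$, since $\|\int f_n - \int f_m\| \le \sup\|f_n - f_m\|$ by the triangle inequality for simple sums. Its limit lies in $A$ by completeness, and this limit is independent of the approximating sequence; that gives (1).

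Next I would prove (2), (3) and (4) for simple functions, where they reduce to finite-sum statements. For (2), $\|\sum \mu(E_i)x_i\| \le \sum \mu(E_i)\|x_i\|$ holds by the triangle inequality. For (3), $T$ is linear and commutes with finite sums. For (4), I approximate $\varphi x$ by $\sum \chi_{E_i}\varphi(g_i)x$, whose integral is $(\sum \mu(E_i)\varphi(g_i))x$; these scalar Riemann-type sums converge to $\int\varphi\,dg$. Each identity then passes to the limit. Both sides of (2) are continuous under uniform convergence, since $|\int\|f_n\| - \int\|f\|| \le \sup\|f_n - f\|$. In (3), $Tf_n \to Tf$ uniformly because $\|T\| < \infty$, and $Tf_n$ is simple, so the same approximation scheme applies to $Tf$. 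Alternatively, (4) can be derived from the scalar case by applying (3)-type continuity to the bounded map $c \mapsto cx$ from $\CC$ to $A$.

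For (5), the translation invariance, I expect the only subtlety. I would reduce it to the scalar statement by using functionals. For $\phi \in A^*$, property (3) holds with $T$ replaced by $\phi : A \to \CC$, by the identical argument, so $\phi(\int_G f(hg)\,dg) = \int_G \phi(f(hg))\,dg = \int_G \phi(f(g))\,dg = \phi(\int_G f\,dg)$. The middle equality is left invariance of Haar measure applied to the continuous scalar function $\phi\circ f$. Hahn--Banach then separates points of $A$, so the two integrals agree. Here $g \mapsto f(hg)$ is continuous, so its integral is defined by the above. The main obstacle is thus only bookkeeping: making the simple-function approximation measurable and well defined, which I handle with diameter-small Borel partitions. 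Everything else is linearity plus one limit passage.
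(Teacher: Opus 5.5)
Your proposal is correct, and its skeleton (uniform approximation by simple functions, verification on simple functions, one limit passage) is the same as the paper's. The paper gets integrability by citing Bochner's criterion: a continuous map on a compact metric space is strongly measurable with separable range. It then quotes the standard properties of the Bochner integral. You instead build the approximants explicitly from small-diameter Borel partitions and uniform continuity. The payoff is that your approximants converge uniformly, which makes the limit passages in (2)--(4) immediate. For (4), your second suggestion, pushing the scalar integral through the bounded map $\lambda \mapsto \lambda x$, is exactly the paper's argument.

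The one genuine divergence is (5).
\begin{itemize}
\item The paper checks invariance directly on a simple function, $\int_G f(hg)\,dg = \sum_i \mu(h^{-1}E_i)\,x_i = \sum_i \mu(E_i)\,x_i$, and then approximates.
\item You reduce to scalar left invariance through functionals and Hahn--Banach, a Pettis-type argument.
\end{itemize}
Your route is valid, but the subtlety you anticipate is not really there. Within your own scheme, $g \mapsto f_n(hg) = \sum_i \chi_{h^{-1}E_i^{(n)}}(g)\, f(g_i)$ is again a simple function. Its integral is $\sum_i \mu(h^{-1}E_i^{(n)})\, f(g_i) = \int_G f_n\,dg$. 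It converges uniformly to $g \mapsto f(hg)$, because $\sup_g \|f_n(hg) - f(hg)\| = \sup_g \|f_n(g) - f(g)\|$. So (5) follows by the same one-step limit as (2)--(4), with no appeal to duality.
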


\begin{proof}
These are standard properties of the Bochner integral; see \cite[Ch.~1]{HNVW}. Integrability follows from Bochner's criterion: $f$ is continuous on a compact metric space, hence strongly measurable with separable range, and $\int_G\|f\|\,dg \le \sup_G\|f\| < \infty$ since $\mu(G)=1$. Part~(1) then holds as $A$ is complete, and part~(2) is the norm inequality for the Bochner integral \cite[Ch.~1]{HNVW}. Part~(3), that bounded operators pass through the integral, is also standard \cite[Ch.~1]{HNVW}. Part~(4) is the special case of (3) applied to the bounded map $\CC \to A$, $\lambda \mapsto \lambda x$; and part~(5) is invariance of the (Haar) measure $\mu$: for a simple $f = \sum_i\chi_{E_i}x_i$ one has $\int_G f(hg)\,dg = \sum_i\mu(h^{-1}E_i)x_i = \sum_i\mu(E_i)x_i$, and the general case follows by approximation.
\end{proof}

\subsection{Density}

For $\varphi \in C^\infty(G)$ and $a \in A$ define the \emph{smoothed element}
\[
a_\varphi := \int_G \varphi(g)\,\alpha_g(a)\,dg \in A,
\]
well defined by Lemma~\ref{lem:bochner}, the integrand being continuous by strong continuity.

\begin{lemma}[the group variable transfers to $\varphi$]\label{lem:grouptransfer}
$\alpha_h(a_\varphi) = \int_G \varphi(h^{-1}u)\,\alpha_u(a)\,du$ for all $h \in G$.
\end{lemma}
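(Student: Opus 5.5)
The plan is to push the bounded operator $\alpha_h$ through the Bochner integral defining $a_\varphi$, and then reparametrize the group variable using left invariance of Haar measure.

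First, $\alpha_h$ is a bounded linear map $A \to A$; it is isometric by \eqref{eq:isom}. The integrand $g \mapsto \varphi(g)\alpha_g(a)$ is continuous by strong continuity. So Lemma~\ref{lem:bochner}(3) applies and gives
\[
\alpha_h(a_\varphi) = \int_G \alpha_h\big(\varphi(g)\alpha_g(a)\big)\,dg = \int_G \varphi(g)\,\alpha_{hg}(a)\,dg .
\]
The second equality uses linearity of $\alpha_h$ and the homomorphism property $\alpha_h\alpha_g = \alpha_{hg}$ of the action.

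Second, I substitute $u = hg$. Define the continuous function $F(u) := \varphi(h^{-1}u)\,\alpha_u(a)$. Then $F(hg) = \varphi(g)\,\alpha_{hg}(a)$, which is exactly the integrand above. Lemma~\ref{lem:bochner}(5) states $\int_G F(hg)\,dg = \int_G F(g)\,dg$. Applying it yields
\[
\alpha_h(a_\varphi) = \int_G F(u)\,du = \int_G \varphi(h^{-1}u)\,\alpha_u(a)\,du,
\]
which is the claim.

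There is no substantial obstacle. The only points needing care are the following.
\begin{itemize}
\item The translation in Lemma~\ref{lem:bochner}(5) must be on the correct side. It is a left translation $g \mapsto hg$, which matches the left invariance of Haar measure.
\item $F$ must be continuous, so that Lemma~\ref{lem:bochner} applies. This holds because $\varphi$ is continuous and the orbit map is continuous, and a product of a continuous scalar function with a continuous $A$-valued map is continuous.
\end{itemize}
Compact groups are unimodular, so no modular function appears.
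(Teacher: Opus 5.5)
Your proposal is correct and follows the paper's proof exactly: you pass $\alpha_h$ through the integral via Lemma~\ref{lem:bochner}(3), use $\alpha_h\alpha_g=\alpha_{hg}$, and substitute $u=hg$ via Lemma~\ref{lem:bochner}(5). Your explicit choice of $F(u)=\varphi(h^{-1}u)\,\alpha_u(a)$, with the check that $F(hg)$ is the integrand, makes the paper's substitution step fully precise.
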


\begin{proof}
By Lemma~\ref{lem:bochner}(3) applied to $\alpha_h$, and $\alpha_h\alpha_g = \alpha_{hg}$, we get $\alpha_h(a_\varphi) = \int_G\varphi(g)\alpha_{hg}(a)\,dg$; substituting $u = hg$ and using Lemma~\ref{lem:bochner}(5) gives the claim.
\end{proof}

The point is that on the right-hand side $h$ occurs only in the scalar factor $\varphi(h^{-1}u)$, which is smooth by choice, whereas $\alpha_u(a)$ does not involve $h$ at all.

\begin{lemma}[smoothing]\label{lem:smoothing}
$a_\varphi \in A^\infty$ for every $a \in A$ and $\varphi \in C^\infty(G)$.
\end{lemma}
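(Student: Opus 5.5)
By Lemma~\ref{lem:grouptransfer}, the orbit map of $a_\varphi$ is
\[
\Phi_{a_\varphi}(h) = \int_G \varphi(h^{-1}u)\,\alpha_u(a)\,du ,
\]
in which $h$ occurs only through the smooth scalar kernel $(h,u) \mapsto \varphi(h^{-1}u)$. The plan is to differentiate under the integral sign in $h$, which requires only bounds on the kernel and the uniform bound $\|\alpha_u(a)\| = \|a\|$ from \eqref{eq:isom}.

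\emph{Step 1: work in local coordinates.} Smoothness of $\Phi_{a_\varphi}$ is local. Fix $h_0 \in G$ and a chart $x \mapsto h(x)$ near $h_0$, say $h(x) = h_0\exp(\sum x_j X_j)$. The function $K(x,u) := \varphi(h(x)^{-1}u)$ is $C^\infty$ on $U \times G$, where $U$ is a small closed ball. Since $G$ is compact, every partial derivative $\partial_x^\beta K$ is continuous and hence bounded on $U \times G$, and uniformly continuous there.

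\emph{Step 2: one derivative at a time.} For a single coordinate direction, apply the mean value theorem to the scalar function $t \mapsto K(x+te_j,u)$. This gives
\[
\Bigl|\tfrac{1}{t}\bigl(K(x+te_j,u)-K(x,u)\bigr) - \partial_jK(x,u)\Bigr| \le \sup_{|s|\le|t|} \bigl|\partial_jK(x+se_j,u)-\partial_jK(x,u)\bigr| ,
\]
which tends to $0$ uniformly in $u$ by uniform continuity of $\partial_j K$. Multiply by $\alpha_u(a)$, whose norm is $\|a\|$, and use Lemma~\ref{lem:bochner}(2). This shows that
\[
\partial_j \int_G K(x,u)\,\alpha_u(a)\,du = \int_G \partial_jK(x,u)\,\alpha_u(a)\,du
\]
in norm. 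The right-hand side has the same form, with kernel $\partial_jK$. It is continuous in $x$ by the same estimate: the difference is bounded by $\sup_u|\partial_jK(x,u)-\partial_jK(x',u)|\cdot\|a\|$.

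\emph{Step 3: induct.} Iterating Step 2 shows that all partial derivatives $\partial^\beta$ of $x \mapsto \Phi_{a_\varphi}(h(x))$ exist and are continuous, each given by the kernel $\partial_x^\beta K$. Hence $\Phi_{a_\varphi}$ is $C^\infty$ near $h_0$. Since $h_0$ was arbitrary, $a_\varphi \in A^\infty$.

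The only delicate point is the interchange of limit and integral in Step 2. For Bochner integrals it reduces to the norm inequality of Lemma~\ref{lem:bochner}(2), so it holds once the difference quotients of the kernel converge uniformly in $u$. That uniformity comes from compactness of $U \times G$ and smoothness of $\varphi$. No regularity of $a$ is used, only continuity of $u \mapsto \alpha_u(a)$, which is needed for the integrals to be defined.
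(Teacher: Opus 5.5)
Your proof is correct. It differs from the paper's in where the differentiation is done.

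The paper works only at the identity and only along one-parameter subgroups. With $\psi(\theta,u)=\varphi(\exp(-\theta X)u)$, it uses a second-order Taylor bound on $\psi$ to show that $\theta\mapsto\alpha_{\exp(\theta X)}(a_\varphi)$ is differentiable at $0$. The derivative is $\int_G\partial_\theta\psi(0,u)\,\alpha_u(a)\,du$, which is again a smoothed element $a_{\tilde\varphi}$ for a smooth $\tilde\varphi$. Iterating puts $a_\varphi$ in every iterated domain $\operatorname{dom}(\delta_{j_1}\cdots\delta_{j_k})$. The final step, ``so $a_\varphi\in A^\infty$'', then implicitly relies on the $(\supseteq)$ half of \eqref{eq:domdesc} in Lemma~\ref{lem:der}.

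You instead differentiate under the integral in all coordinates of a translated exponential chart at an arbitrary point $h_0$. You need only uniform continuity of the kernel derivatives $\partial_x^\beta K$ on the compact set $U\times G$, and you verify the definition of $A^\infty$ directly.

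What each route buys:
\begin{enumerate}
\item Your route does not depend on \eqref{eq:domdesc}, whose proof via coordinates of the second kind is the least explicit step in that part of the paper. It also does not need Lemma~\ref{lem:C1}.
\item The paper's route stays one-dimensional. It also yields the G\aa{}rding-type identity $\delta_X(a_\varphi)=a_{\tilde\varphi}$, which says the derivations act on smoothed elements by differentiating the bump function.
\end{enumerate}

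One small point: $\varphi$ may be complex-valued, so the mean value \emph{equality} is not available. Your displayed inequality still holds, for instance from $\frac1t\bigl(f(t)-f(0)\bigr)-f'(0)=\frac1t\int_0^t\bigl(f'(s)-f'(0)\bigr)\,ds$.
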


\begin{proof}
It suffices to differentiate once along a one-parameter subgroup and observe that the result has the same form. Put $\psi(\theta,u) := \varphi(\exp(-\theta X)u)$ and $x_u := \alpha_u(a)$, so that by Lemma~\ref{lem:grouptransfer}
\[
\Phi(\theta) := \alpha_{\exp(\theta X)}(a_\varphi) = \int_G \psi(\theta,u)\,x_u\,du ,
\]
where $x_u$ does not depend on $\theta$ and $\|x_u\| = \|a\|$ by \eqref{eq:isom}. As $\varphi \in C^\infty(G)$ and $(\theta,u)\mapsto \exp(-\theta X)u$ is smooth, $\psi$ is smooth in $\theta$ and
$M := \sup\{|\partial_\theta^2\psi(\sigma,u)| : |\sigma|\le1,\ u \in G\} < \infty$,
being the supremum of a continuous function on a compact set. For $0 < |t| \le 1$, Taylor's theorem with Lagrange remainder applied to $\sigma \mapsto \psi(\sigma,u)$ gives
\[
R_t(u) := \frac{\psi(t,u)-\psi(0,u)}{t} - \partial_\theta\psi(0,u) = \frac{t}{2}\,\partial^2_\theta\psi(\sigma,u), \qquad |R_t(u)| \le \frac{M|t|}{2},
\]
for some $\sigma = \sigma(t,u)$ with $|\sigma| \le |t|$. Hence by Lemma~\ref{lem:bochner}(2) and $\mu(G)=1$,
\[
\Big\|\frac{\Phi(t)-\Phi(0)}{t} - \int_G \partial_\theta\psi(0,u)\,x_u\,du\Big\| = \Big\|\int_G R_t(u)\,x_u\,du\Big\| \le \frac{M\|a\|}{2}|t| \xrightarrow[t\to0]{} 0 .
\]
So $\Phi$ is differentiable, and the derivative is an integral of the same shape with $\varphi$ replaced by a derivative of $\varphi$. Induction gives derivatives of all orders, so $a_\varphi \in A^\infty$.
\end{proof}

\begin{lemma}[bump functions]\label{lem:bump}
For each $\eta > 0$ there is $\varphi \in C^\infty(G)$ with $\varphi \ge 0$, $\int_G\varphi\,dg = 1$ and $\operatorname{supp}\varphi \subseteq B_\eta(e)$.
\end{lemma}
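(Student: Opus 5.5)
The plan is to build $\varphi$ in local exponential coordinates, push it forward to $G$, and then normalize. As recalled above, $\exp:\g\to G$ restricts to a diffeomorphism from some neighbourhood $U$ of $0\in\g$ onto a neighbourhood $V$ of $e$. Here $B_\eta(e)$ refers to a ball for some metric compatible with the topology of $G$. Since $G$ is a compact Lie group it is metrizable, and any Riemannian metric will do. The only property used is that $B_\eta(e)$ is an open neighbourhood of $e$.

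First, shrink. The set $V\cap B_\eta(e)$ is an open neighbourhood of $e$, so its preimage $\exp^{-1}(V\cap B_\eta(e))\cap U$ is an open neighbourhood of $0$ in $\g\cong\mathbb{R}^m$, using the basis $X_1,\dots,X_m$. Choose $r>0$ so that the closed Euclidean ball $\overline{B_r(0)}$ lies inside this set.

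Second, take the standard bump $\beta(x)=\exp\!\bigl(-1/(r^2-|x|^2)\bigr)$ for $|x|<r$ and $\beta(x)=0$ otherwise. This function is $C^\infty$ on $\mathbb{R}^m$, nonnegative, has support $\overline{B_r(0)}$, and satisfies $\beta(0)>0$.

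Third, transport it to $G$. Set $\tilde\varphi=\beta\circ(\exp|_U)^{-1}$ on $V$ and $\tilde\varphi=0$ off $V$. On $V$ this is smooth, being a composite of smooth maps. Off the compact set $K:=\exp(\overline{B_r(0)})\subseteq V$ it vanishes identically on the open set $G\setminus K$. Since $V$ and $G\setminus K$ form an open cover of $G$, $\tilde\varphi\in C^\infty(G)$. By construction $\tilde\varphi\ge 0$ and $\operatorname{supp}\tilde\varphi\subseteq K\subseteq B_\eta(e)$.

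Fourth, normalize. The integral $c:=\int_G\tilde\varphi\,dg$ is strictly positive: $\tilde\varphi$ is continuous, nonnegative, and positive on a nonempty open set around $e$. A nonempty open set has positive Haar measure, because finitely many translates of it cover the compact group $G$ and the measure is invariant with total mass $1$. Then $\varphi:=\tilde\varphi/c$ has all the required properties.

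The only step needing care is the claim $c>0$, that is, that open sets have positive Haar measure; the compactness and translation-invariance argument above settles it. If $B_\eta$ is meant in a specific metric, nothing changes, because only openness is used.
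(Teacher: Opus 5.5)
Your proof is correct and follows the same route as the paper: a bump function in a chart at $e$ (you take exponential coordinates), extended by zero, then normalized by its Haar integral, which is positive because Haar measure charges nonempty open sets. You also supply two details the paper only asserts --- smoothness of the zero extension via the open cover $\{V,\ G\setminus K\}$, and the finite-translate compactness argument for positivity of Haar measure on open sets --- though you should rename the real exponential in $\beta$ so it does not clash with the Lie exponential $\exp$.
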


\begin{proof}
$G$ is a smooth manifold, so there is a nonzero $\psi \in C^\infty(G)$, $\psi \ge 0$, supported in $B_\eta(e)$ (a bump function in a chart at $e$, extended by zero). Then $c := \int_G\psi\,dg > 0$, since $\psi$ is continuous, nonnegative and nonzero and Haar measure is positive on nonempty open sets; take $\varphi := c^{-1}\psi$.
\end{proof}

\begin{theorem}\label{thm:dense}
$A^\infty$ is dense in $A$.
\end{theorem}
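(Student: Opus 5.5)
The plan is the standard G\aa{}rding smoothing argument: approximate an arbitrary $a \in A$ by the smoothed elements $a_\varphi$ with $\varphi$ a bump function concentrated near the identity. By Lemma~\ref{lem:smoothing}, every $a_\varphi$ already lies in $A^\infty$, so density reduces to showing that $a_\varphi$ can be made norm-close to $a$.

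Fix $a \in A$ and $\varepsilon > 0$. By strong continuity of $\alpha$ at the identity, choose $\eta > 0$ such that $\|\alpha_g(a) - a\| < \varepsilon$ whenever $g \in B_\eta(e)$. Here $B_\eta(e)$ is a ball for any fixed metric on the compact manifold $G$; all that matters is that these balls form a neighbourhood base at $e$. Lemma~\ref{lem:bump} then supplies $\varphi \in C^\infty(G)$ with $\varphi \ge 0$, $\int_G \varphi\,dg = 1$ and $\operatorname{supp}\varphi \subseteq B_\eta(e)$.

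Next, use Lemma~\ref{lem:bochner}(4) together with $\int_G\varphi = 1$ to write $a = \int_G \varphi(g)\,a\,dg$. Subtracting gives
\[
a_\varphi - a = \int_G \varphi(g)\bigl(\alpha_g(a) - a\bigr)\,dg .
\]
By Lemma~\ref{lem:bochner}(2), and because the integrand vanishes off $B_\eta(e)$,
\[
\|a_\varphi - a\| \le \int_G \varphi(g)\,\|\alpha_g(a)-a\|\,dg \le \varepsilon \int_G \varphi\,dg = \varepsilon .
\]
Since $a_\varphi \in A^\infty$ by Lemma~\ref{lem:smoothing}, and $\varepsilon$ was arbitrary, $A^\infty$ is dense in $A$.

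The main point needing care is that the choice of $\eta$ must be uniform over the support of $\varphi$. This is exactly what strong continuity at $e$ provides, and the isometry property \eqref{eq:isom} is not even needed here. There is no real obstacle beyond the smoothing lemma, which has already been established. I would also note that the proof of Lemma~\ref{lem:der} cites this theorem for dense domain, but there is no circularity: the argument above uses only Lemmas~\ref{lem:bochner}, \ref{lem:smoothing} and \ref{lem:bump}.
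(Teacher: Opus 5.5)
Your proposal is correct and follows the paper's own proof essentially step for step. You choose $\eta$ by strong continuity at $e$, take the bump function of Lemma~\ref{lem:bump}, and estimate $\|a_\varphi - a\| \le \varepsilon$ via Lemma~\ref{lem:bochner}(4) and (2), with $a_\varphi \in A^\infty$ supplied by Lemma~\ref{lem:smoothing}. Your non-circularity remark is also sound: the only part of Lemma~\ref{lem:der} that Lemma~\ref{lem:smoothing} implicitly relies on is the inclusion $\supseteq$ in \eqref{eq:domdesc}, and that inclusion does not use density.
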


\begin{proof}
Let $a \in A$ and $\varepsilon > 0$. Since $\Phi_a$ is continuous at $e$ with $\Phi_a(e) = a$, there is $\eta > 0$ with $\|\alpha_g(a)-a\| < \varepsilon$ whenever $d(g,e) < \eta$. Choose $\varphi$ as in Lemma~\ref{lem:bump} for this $\eta$ and put $b := a_\varphi \in A^\infty$ (Lemma~\ref{lem:smoothing}). By Lemma~\ref{lem:bochner}(4) and $\int_G\varphi = 1$ we may write $a = \int_G\varphi(g)\,a\,dg$, so by Lemma~\ref{lem:bochner}(2)
\[
\|b-a\| = \Big\|\int_G \varphi(g)\big[\alpha_g(a)-a\big]dg\Big\| \le \int_G \varphi(g)\,\|\alpha_g(a)-a\|\,dg \le \varepsilon\int_G\varphi(g)\,dg = \varepsilon ,
\]
the last inequality because the integrand vanishes off $\operatorname{supp}\varphi \subseteq B_\eta(e)$ and $\varphi \ge 0$.
\end{proof}

\subsection{The Fr\'echet structure and inverse-closedness}

\begin{theorem}\label{thm:frechetstructure}
$A^\infty$ is a unital Fr\'echet subalgebra of $A$ in the sense of Definition~\ref{def:frechet}, for the seminorms
\[
\|a\|_w := \|\delta_{j_1}\cdots\delta_{j_k}(a)\|, \qquad w = (j_1,\dots,j_k) \text{ a word in } \{1,\dots,m\},
\]
the empty word giving $\|a\|_\emptyset = \|a\|$.
\end{theorem}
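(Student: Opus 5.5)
We verify the four requirements of Definition~\ref{def:frechet} in turn: $A^\infty$ is a unital subalgebra; the seminorms define a metrizable locally convex Hausdorff topology finer than the norm; this topology is complete; and multiplication is jointly continuous with the required estimate.

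\emph{Algebra, unit, countability, Hausdorff, finer.} By \eqref{eq:domdesc}, $A^\infty$ is the common domain of all iterated derivations $\delta_{j_1}\cdots\delta_{j_k}$, and it is invariant under each $\delta_j$. Linearity is clear. The unit lies in $A^\infty$ because $\Phi_1$ is constant. For closure under products, Lemma~\ref{lem:der} gives that each $\delta_X$ is a derivation. Iterating the Leibniz rule yields the expansion
\[
\delta_{j_1}\cdots\delta_{j_k}(ab)=\sum_{S\subseteq\{1,\dots,k\}}\delta_{w_S}(a)\,\delta_{w_{S^c}}(b),
\]
where $w_S$ is the subword indexed by $S$ in order. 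An induction on $k$ shows that $ab$ lies in every iterated domain, so $ab\in A^\infty$. The words over $\{1,\dots,m\}$ form a countable set, so the seminorm family is countable. Because $\|\cdot\|_\emptyset=\|\cdot\|$ is among the seminorms, the topology is Hausdorff and finer than the norm topology; this is exactly continuity of the inclusion.

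\emph{Joint continuity.} Taking norms in the displayed expansion gives
\[
\|ab\|_w\le\sum_S\|a\|_{w_S}\|b\|_{w_{S^c}}.
\]
This is the estimate of Definition~\ref{def:frechet}, with $C_k=2^k$ and $m_k$ ranging over all words of length at most $k$. Reindexing the seminorms by a single integer enumeration of words turns it into the stated form verbatim.

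\emph{Completeness (main step).} Let $(a_n)$ be Cauchy for every $\|\cdot\|_w$. Then $a_n\to a$ in $A$, and for each word $w$ the sequence $\delta_w(a_n)$ converges in $A$ to some $y_w$. We show by induction on the length of $w$ that $a\in\operatorname{dom}(\delta_w)$ and $\delta_w(a)=y_w$. The base case, the empty word, is the definition of $a$. For the inductive step, write $w=jw'$ with $w'$ shorter. The induction hypothesis gives $\delta_{w'}(a_n)\to y_{w'}=\delta_{w'}(a)$, and we also have $\delta_j(\delta_{w'}(a_n))=\delta_w(a_n)\to y_w$. Closedness of $\delta_j$ (Lemma~\ref{lem:der}) then gives $\delta_{w'}(a)\in\operatorname{dom}(\delta_j)$ with $\delta_j\delta_{w'}(a)=y_w$. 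Hence $a$ lies in every iterated domain, so $a\in A^\infty$ by \eqref{eq:domdesc}. Moreover $\|a_n-a\|_w=\|\delta_w(a_n)-y_w\|\to0$ for every $w$, so $a_n\to a$ in the Fr\'echet topology.

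The only delicate points are the ordering in this induction and the use of \eqref{eq:domdesc} for both directions of the membership claim. Both rest on closedness of the $\delta_j$, which in turn comes from the isometric action through Lemma~\ref{lem:C1}.
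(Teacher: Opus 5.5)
Your proof is correct and follows the paper's: completeness by induction on word length using closedness of $\delta_j$ and \eqref{eq:domdesc}, and joint continuity from the iterated Leibniz expansion. The only deviation is closure under products. The paper gets it directly from $\Phi_{ab}(g)=\alpha_g(a)\alpha_g(b)$, which is the smooth map $g\mapsto(\Phi_a(g),\Phi_b(g))$ composed with bounded bilinear multiplication, whereas you route it through the Leibniz expansion, $\delta_j$-invariance of $A^\infty$ and the $(\supseteq)$ half of \eqref{eq:domdesc} --- equally valid, and it reuses the expansion you need anyway for the continuity estimate.
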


\begin{proof}
\emph{Unital subalgebra.} $\alpha_g(1) = 1$, so $\Phi_1$ is constant and $1 \in A^\infty$. If $a,b \in A^\infty$ then $\Phi_{ab}(g) = \alpha_g(a)\alpha_g(b)$ is the composite of the $C^\infty$ map $g \mapsto (\Phi_a(g),\Phi_b(g))$ with the bounded bilinear multiplication $A \times A \to A$, hence $C^\infty$; so $ab \in A^\infty$. Sums and scalar multiples are clear.

\emph{Countable, Hausdorff, finer.} The words in $\{1,\dots,m\}$ form a countable set. The empty word gives the $C^*$-norm, so the topology is Hausdorff and finer than the norm topology.

\emph{Completeness.} Let $(a_n)$ be Cauchy for every seminorm $\|\cdot\|_w$. Since $\|a_n - a_k\|_w = \|\delta_w(a_n) - \delta_w(a_k)\|$, each sequence $(\delta_w(a_n))_n$ is Cauchy in the norm of $A$; as $A$ is complete it has a limit, say $a^{(w)}$, and we set $a := a^{(\emptyset)} = \lim_n a_n$. These limits are a priori unrelated elements of $A$, and completeness comes down to showing that $a^{(w)}$ is in fact the derivative $\delta_w(a)$ --- so that $a$ is smooth and $a_n \to a$ in the topology. This is exactly what closedness of the derivations provides. We prove $a \in \operatorname{dom}(\delta_w)$ with $\delta_w(a) = a^{(w)}$ by induction on the word length $|w|$; the case $w = \emptyset$ is the definition of $a$. Assume the claim for $w$ and let $w' = (j,w)$. Then $\delta_w(a_n) \to \delta_w(a)$ (inductive hypothesis) and $\delta_j(\delta_w(a_n)) = \delta_{w'}(a_n) \to a^{(w')}$; applying closedness of $\delta_j$ (Lemma~\ref{lem:der}) to the sequence $\delta_w(a_n)$ gives $\delta_w(a) \in \operatorname{dom}(\delta_j)$ and $\delta_{w'}(a) = a^{(w')}$, which is the claim for $w'$. Each closedness step thus feeds the next, so the induction runs over all words. By \eqref{eq:domdesc}, $a$ lies in every iterated domain, i.e.\ $a \in A^\infty$; and $\|a_n - a\|_w = \|\delta_w(a_n) - a^{(w)}\| \to 0$ for every $w$, so $a_n \to a$ in the topology.

\emph{Joint continuity.} Iterating the Leibniz rule of Lemma~\ref{lem:der} one letter at a time --- each $\delta_j$ landing on either factor --- gives $\delta_w(ab) = \sum_{(u,v)}\delta_u(a)\delta_v(b)$, the sum over the $2^{|w|}$ ways of distributing the letters of $w$ in order between $u$ and $v$; hence
\begin{equation}\label{eq:leibnizbound}
\|ab\|_w \le \sum_{(u,v)}\|a\|_u\,\|b\|_v,
\end{equation}
a finite sum of products of seminorms of $a$ and of $b$ (for $m=1$ this is $\|ab\|_k \le \sum_j\binom kj\|a\|_j\|b\|_{k-j}$). This inequality is precisely joint continuity of multiplication: fixing $(a_0,b_0)$ and writing $ab - a_0b_0 = (a-a_0)b_0 + a_0(b-b_0) + (a-a_0)(b-b_0)$, the triangle inequality and \eqref{eq:leibnizbound} bound $\|ab - a_0 b_0\|_w$ by a finite sum in which every term carries a factor $\|a - a_0\|_u$ or $\|b - b_0\|_v$; so if $a \to a_0$ and $b \to b_0$ in the topology, then $ab \to a_0 b_0$. In particular the seminorms $\|\cdot\|_w$ satisfy the bound required in Definition~\ref{def:frechet}.
\end{proof}

\begin{remark}[the index set]\label{rem:words}
Each iterated derivative is specified by choosing, at each stage, one of $\delta_1,\dots,\delta_m$ --- that is, by a finite word in $\{1,\dots,m\}$, where $m = \dim\g$. A \emph{basis} of $\g$ suffices, since $X \mapsto \delta_X$ is linear; indexing by all of $\g$ would give an uncountable, redundant family. \emph{Words} rather than multi-indices, because order matters: $X \mapsto \delta_X$ is a Lie algebra homomorphism, so $[\delta_i,\delta_j] = \delta_{[X_i,X_j]} \ne 0$ for non-abelian $\g$. \emph{Finite} words, since each seminorm measures a derivative of finite order and smoothness asks that all finite orders exist. As there are $m^k$ words of length $k$, the index set is countable, as Definition~\ref{def:frechet} requires.
\end{remark}

\begin{theorem}\label{thm:invclosed}
If $a \in A^\infty$ is invertible in $A$, then $a^{-1} \in A^\infty$.
\end{theorem}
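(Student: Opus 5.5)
The plan is to show that the orbit map of $a^{-1}$ is the composite of the orbit map of $a$ with inversion, and that inversion is real-analytic on $\Inv(A)$. Since each $\alpha_g$ is a unital automorphism, $\alpha_g(a^{-1}) = \alpha_g(a)^{-1}$. Hence
\[
\Phi_{a^{-1}} = \iota \circ \Phi_a, \qquad \iota : \Inv(A) \to \Inv(A),\ \iota(x) = x^{-1}.
\]
Because $\Inv(A)$ is open and $\Phi_a(G) \subseteq \Inv(A)$, it suffices to know that $\iota$ is $C^\infty$ as a map between open subsets of the Banach space $A$. Then the chain rule for smooth maps between Banach manifolds makes $\iota\circ\Phi_a$ smooth on $G$, so $a^{-1} \in A^\infty$.

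\emph{Step 1 (smoothness of inversion).} Near $x_0 \in \Inv(A)$, for $\|h\| < \|x_0^{-1}\|^{-1}$, the Neumann series gives
\[
(x_0+h)^{-1} = \sum_{k\ge0}(-x_0^{-1}h)^k x_0^{-1}.
\]
This is a norm-convergent power series in $h$ of continuous homogeneous polynomials, so $\iota$ is analytic and in particular $C^\infty$. Its first derivative is $D\iota(x)[h] = -x^{-1}hx^{-1}$, and higher derivatives are finite sums of products of $x^{-1}$ and the $h_i$.

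\emph{Step 2 (chain rule).} We apply the chain rule in the Banach space setting, which is valid since $G$ is a finite-dimensional manifold and $\Phi_a$ is $C^\infty$ into the open set $\Inv(A)$.

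An alternative route avoids general chain-rule machinery and fits the description \eqref{eq:domdesc}. By \eqref{eq:domdesc} it suffices to show inductively that all iterated derivations of $a^{-1}$ exist. First, for $X\in\g$ with $\alpha_t=\alpha_{\exp(tX)}$, one has
\[
\frac{\alpha_t(a^{-1})-a^{-1}}{t} = -\alpha_t(a)^{-1}\,\frac{\alpha_t(a)-a}{t}\,a^{-1}.
\]
Here $\alpha_t(a)^{-1}=\alpha_t(a^{-1})\to a^{-1}$ by strong continuity, so $\delta_X(a^{-1}) = -a^{-1}\delta_X(a)a^{-1}$ exists. Second, by induction on word length, $\delta_w(a^{-1})$ is a finite sum of products of factors $a^{-1}$ and $\delta_u(a)$. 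The Leibniz rule of Lemma~\ref{lem:der} together with the first step then shows that each such product lies in $\operatorname{dom}(\delta_j)$. Indeed, every factor does: the $\delta_u(a)$ lie in $A^\infty$ by invariance, and $a^{-1}$ lies in every $\operatorname{dom}(\delta_j)$ by the first step. Products of elements of $\operatorname{dom}(\delta_j)$ stay in $\operatorname{dom}(\delta_j)$ by Lemma~\ref{lem:der}.

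The main obstacle is bookkeeping rather than analysis. For the chain-rule route, one must justify the Banach-valued chain rule on a manifold. For the derivation route, one must formulate the induction hypothesis precisely: \emph{for every word $w$, $a^{-1}\in\operatorname{dom}(\delta_w)$ and $\delta_w(a^{-1})$ lies in the algebra generated by $a^{-1}$ and $A^\infty$}. With this hypothesis the closure under each $\delta_j$ goes through one letter at a time. I would present the derivation route, since it uses only Lemma~\ref{lem:der} and \eqref{eq:domdesc}.
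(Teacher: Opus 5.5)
Your first route, writing $\Phi_{a^{-1}} = \iota\circ\Phi_a$ with $\iota$ analytic on $\Inv(A)$ by the Neumann series and then applying the chain rule, is exactly the paper's proof. The derivation route you say you would actually present is a genuinely different argument, and it is also correct.

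Its key point is that the identity $\alpha_t(a^{-1}) - a^{-1} = -\alpha_t(a)^{-1}\bigl(\alpha_t(a)-a\bigr)a^{-1}$ \emph{proves} that $\delta_X(a^{-1})$ exists rather than presupposing it. It is therefore the non-circular form of the ``natural argument'' that the paper sets aside just before its proof. Your induction closes because the algebra generated by $a^{-1}$ and $A^\infty$ lies in every $\operatorname{dom}(\delta_j)$ and is mapped into itself by every $\delta_j$.

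What it buys: your recursion expresses $\delta_w(a^{-1})$ as a universal noncommutative polynomial in $a^{-1}$ and the $\delta_u(a)$ with $|u|\le|w|$. Norm-continuity of inversion in $A$ then immediately gives continuity of inversion on $A^\infty\cap\Inv(A)$ for the seminorms $\|\cdot\|_w$. Step~1 of Theorem~\ref{thm:schweitzer} reaches that conclusion only abstractly, via (P).

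What it costs: it uses both halves of \eqref{eq:domdesc}. You need invariance of $A^\infty$ under the $\delta_X$, and, at the last step, the inclusion $\bigcap_w \operatorname{dom}(\delta_w) \subseteq A^\infty$. For $G=\mathbb{T}$ that inclusion is immediate from Lemma~\ref{lem:C1}. For non-abelian $G$ it is the substantive half of Lemma~\ref{lem:der}: one must pass from iterated domains of the $m$ basis derivations to smoothness of the orbit map in all directions. The paper's argument avoids this, working directly from Definition~\ref{def:smooth} and using nothing beyond the Banach-valued chain rule.
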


The natural argument --- differentiate $a^{-1}a = 1$ to get $\delta(a^{-1}) = -a^{-1}\delta(a)a^{-1}$ --- is circular as it stands, since applying $\delta$ to $a^{-1}$ presupposes $a^{-1} \in \operatorname{dom}(\delta)$, which is what is to be proved. We argue with orbit maps instead.

\begin{proof}
Each $\alpha_g$ is an algebra automorphism, so $\alpha_g(a^{-1}) = \alpha_g(a)^{-1}$ and hence
\[
\Phi_{a^{-1}} = \iota \circ \Phi_a, \qquad \iota : \Inv(A) \to \Inv(A),\quad \iota(x) = x^{-1}.
\]
Now $\Phi_a$ is $C^\infty$ because $a \in A^\infty$, and takes values in the open set $\Inv(A)$. The map $\iota$ is $C^\infty$ there: for $\|h\| < \|x_0^{-1}\|^{-1}$ the Neumann series gives $(x_0+h)^{-1} = \sum_{k\ge0}(-1)^k(x_0^{-1}h)^kx_0^{-1}$, a norm-convergent power series in $h$, so $\iota$ is analytic with derivative $h \mapsto -x_0^{-1}hx_0^{-1}$. A composite of $C^\infty$ maps is $C^\infty$, so $\Phi_{a^{-1}}$ is $C^\infty$.
\end{proof}

\begin{remark}
Differentiating $\alpha_g(a^{-1}) = \alpha_g(a)^{-1}$ at $g = e$ now recovers $\delta_X(a^{-1}) = -a^{-1}\delta_X(a)a^{-1}$ legitimately, both sides being known to exist.
\end{remark}

\subsection{The hypotheses are satisfied}

\begin{corollary}[the hypothesis class is populated]\label{cor:examples}
Let $A$ be a unital purely infinite simple $C^*$-algebra carrying a strongly continuous action of a compact Lie group. Then $A^\infty$ is a dense, hfc-closed, unital subring of $A$ which is purely infinite simple as a ring.
\end{corollary}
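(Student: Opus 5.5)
The corollary follows by assembling results already proved in this section and then applying the converse theorem. Nothing new has to be built; the work is in checking that each hypothesis is met.

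\emph{Structure.} Theorem~\ref{thm:frechetstructure} shows that $A^\infty$ is a unital subalgebra of $A$, hence a unital subring. It also shows that $A^\infty$ carries a Fr\'echet algebra topology, given by the countable family of seminorms $\|\cdot\|_w$, and that this topology is finer than the norm topology. Density in $A$ is Theorem~\ref{thm:dense}. Inverse-closedness is Theorem~\ref{thm:invclosed}: an element of $A^\infty$ that is invertible in $A$ has its inverse in $A^\infty$.

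\emph{hfc-closure.} With density, the Fr\'echet subalgebra structure and inverse-closedness in hand, Schweitzer's Theorem~\ref{thm:schweitzer} applies in the direction $(\Rightarrow)$. Its hypotheses are exactly that $A^\infty$ is a dense unital Fr\'echet subalgebra in the sense of Definition~\ref{def:frechet} and that it is inverse-closed. It gives that $A^\infty$ is closed under holomorphic functional calculus in $A$.

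\emph{Pure infiniteness.} By hypothesis $A$ is a unital purely infinite simple $C^*$-algebra, and $A^\infty$ is a dense inverse-closed unital subring. Theorem~\ref{thm:converse} then says $A^\infty$ is purely infinite simple as a ring; equivalently, one can cite the final clause of Theorem~\ref{thm:char}. Real rank zero is not needed for this step, although it holds automatically by Zhang's theorem.

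\emph{Main point to check.} The only real obstacle is that Schweitzer's theorem needs \emph{joint} continuity of multiplication with the precise bound in Definition~\ref{def:frechet}. This is the Leibniz estimate \eqref{eq:leibnizbound}: for a word $w$, it bounds $\|ab\|_w$ by a finite sum $\sum\|a\|_u\|b\|_v$ over words $u,v$ of length at most $|w|$. That is the required bound with $m_k$ equal to the word length.

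The application also needs the following four facts, each of which is immediate:
\begin{itemize}
\item The topology on $A^\infty$ is metrizable, because the index set of words is countable.
\item It is complete, by the closedness argument inside Theorem~\ref{thm:frechetstructure}.
\item The inclusion of $A^\infty$ into $A$ is continuous, because the empty word gives the $C^*$-norm.
\item $A^\infty$ is unital.
\end{itemize}
For the motivating example, the gauge action of $\mathbb{T}$ on $\mathcal{O}_n$ satisfies the hypotheses by Lemma~\ref{lem:strongcont}, and Lemma~\ref{lem:proper} shows that in this case $A^\infty$ is a proper subalgebra.
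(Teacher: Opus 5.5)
Your proposal is correct and follows the paper's proof exactly: density from Theorem~\ref{thm:dense}, the Fr\'echet structure from Theorem~\ref{thm:frechetstructure}, inverse-closedness from Theorem~\ref{thm:invclosed}, hfc-closure from Schweitzer's Theorem~\ref{thm:schweitzer}, and ring-theoretic pure infiniteness from Theorem~\ref{thm:converse}. Your extra checks of the Leibniz bound and the other Fr\'echet axioms only repeat what Theorem~\ref{thm:frechetstructure} already proves; note that matching the indexed bound in Definition~\ref{def:frechet} literally requires enumerating the countably many words, for instance by length, rather than taking $m_k$ to be ``the word length''.
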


\begin{proof}
$A^\infty$ is dense by Theorem~\ref{thm:dense}, a Fr\'echet subalgebra by Theorem~\ref{thm:frechetstructure}, and inverse-closed by Theorem~\ref{thm:invclosed}; hence hfc-closed by Theorem~\ref{thm:schweitzer}. Being a dense inverse-closed unital subring of a purely infinite simple $C^*$-algebra, it is purely infinite simple as a ring by Theorem~\ref{thm:converse}.
\end{proof}

\begin{example}[the Cuntz algebra]\label{ex:cuntz}
Let $\mathcal{O}_n$ ($2 \le n < \infty$) be the Cuntz algebra, generated by isometries $s_1,\dots,s_n$ with $s_i^*s_j = \delta_{ij}1$ and $\sum_i s_is_i^* = 1$, and let $\mathbb{T}$ act by the gauge action $\alpha_z(s_i) = zs_i$. Corollary~\ref{cor:examples} applies: $\mathcal{O}_n$ is unital and purely infinite simple, and the action is strongly continuous, so
\[
\mathcal{O}_n^\infty = \{a \in \mathcal{O}_n : z \mapsto \alpha_z(a) \text{ is } C^\infty\}
\]
is a dense hfc-closed unital subalgebra of $\mathcal{O}_n$ which is purely infinite simple as a ring. Here $m = \dim\mathbb{T} = 1$, so there is a single derivation $\delta$, acting on monomials by $\delta(s_\mu s_\nu^*) = i(|\mu|-|\nu|)s_\mu s_\nu^*$, and the seminorms are $\|a\|_k = \|\delta^k(a)\|$.

The inclusion $\mathcal{O}_n^\infty \subseteq \mathcal{O}_n$ is proper (Lemma~\ref{lem:proper}): the element $a = \sum_{d \ge 1} d^{-2}s_1^{\,d}$ converges in norm, but its orbit map is not three times differentiable, since the formal third derivative $-i\sum_{d\ge1} d\,s_1^{\,d}$ has terms of norm $d \to \infty$. Note also that the Leavitt algebra $L_n$, the dense $*$-subalgebra spanned by the monomials, satisfies $L_n \subseteq \mathcal{O}_n^\infty$ --- each monomial has orbit map $\theta \mapsto e^{i\theta d}s_\mu s_\nu^*$, manifestly smooth --- so for this action density of $\mathcal{O}_n^\infty$ also follows directly from density of $L_n$, without recourse to the G\aa{}rding argument in the proof of Corollary~\ref{cor:examples}. The general argument is needed when no such distinguished dense subalgebra of smooth elements is available.
\end{example}

\begin{remark}\label{rem:three}
Example~\ref{ex:cuntz} exhibits the chain
\[
L_n \ \subsetneq\ \mathcal{O}_n^\infty \ \subsetneq\ \mathcal{O}_n ,
\]
the first inclusion being proper because $\sum_{d\ge1}2^{-d}s_1^{\,d}$ is smooth but not a finite sum of monomials. Informally $L_n$ is the ``polynomial'' layer, $\mathcal{O}_n^\infty$ the ``smooth'' layer and $\mathcal{O}_n$ the ``continuous'' layer, in exact analogy with trigonometric polynomials inside $C^\infty(\mathbb{T})$ inside $C(\mathbb{T})$.
\end{remark}

\begin{remark}\label{rem:provenance}
Smooth vectors for a compact group action, and the Fr\'echet structure they carry, go back to G\aa{}rding; they are developed for $C^*$-algebras in \cite{BC}. The circle action on $\mathcal{O}_n$ and its fixed-point core are due to Cuntz \cite{Cuntz77}, and the term ``gauge action'' comes from the Cuntz--Krieger and graph-algebra literature \cite{Raeburn}.
\end{remark}

\begin{remark}\label{rem:regimes}
A dense subring can have two desirable properties, and for the transfer problem they pull in opposite directions. One is being \emph{hfc-closed}, which Theorem~\ref{thm:transfer} requires. The other is having its pure infiniteness verifiable \emph{on its own}, without reference to the completion.

The known examples have exactly one of the two. Combinatorial subalgebras --- the Leavitt algebra $L_n$, and Steinberg algebras generally --- have exact generators, so their pure infiniteness as a ring is easy to check directly; but they are not hfc-closed (Example~\ref{ex:laurent}), and Theorem~\ref{thm:transfer} does not apply to them. These are the algebras handled, by different methods, in the groupoid setting of Brown--Clark--an~Huef \cite{BCH}. Smooth subalgebras such as $\mathcal{O}_n^\infty$ are the reverse: they are hfc-closed, so the theorem applies, but their pure infiniteness is known only because the completion's is (Theorem~\ref{thm:converse}).

The question left open is whether a single subring can have both properties at once: a dense hfc-closed subring, purely infinite as a ring for a reason internal to it rather than inherited from the completion. Theorem~\ref{thm:transfer} would then yield the pure infiniteness of the completion from purely ring-theoretic data.
\end{remark}

\section*{Acknowledgements}
The author gratefully acknowledges the financial support and hospitality provided by Dr. S Barik during the visit at ISI Bangalore, where most part of this article was written, which is funded through his INSPIRE Faculty Fellowship Research Grant (DST/INSPIRE/Faculty/\\2023/IFA-23-MA-201) from the Department of Science and Technology (DST), Government of India. The author is grateful to Prof. Ping W Ng for his constant encouragement and support during the course of this work.

\end{document}